\tikzstyle cross=[preaction={draw=white, -, line width=4pt}, thick]
\tikzstyle vcross=[preaction={draw=blue!30, -, line width=4pt}, thick]
\tikzstyle normal=[thick]
\tikzstyle chord=[densely dotted, thick]
\tikzstyle zero=[ultra thick, gray]
\tikzstyle zerocross=[preaction={draw=white, -, line width=4pt}, ultra thick, gray]
\tikzstyle point=[draw,circle,inner sep=1,fill=black]
\tikzstyle map=[draw,rectangle,fill=white]
\tikzstyle petitpoint=[draw,circle,inner sep=0.3,fill=black]
\newcommand{\negative}[3][-]{\draw[normal,#1] (#2+1,-#3).. controls (#2+1,-#3-0.3) and (#2,-#3-0.7)..(#2,-#3-1); \draw[cross,#1] (#2,-#3).. controls (#2,-#3-0.3) and (#2+1,-#3-0.7)..(#2+1,-#3-1);}
\newcommand{\positive}[3][-]{ \draw[normal,#1] (#2,-#3).. controls (#2,-#3-0.3) and (#2+1,-#3-0.7)..(#2+1,-#3-1);\draw[cross,#1] (#2+1,-#3).. controls (#2+1,-#3-0.3) and (#2,-#3-0.7)..(#2,-#3-1);}
\newcommand{\up}[3][-]{
\draw[normal,#1] (#2,-#3).. controls (#2,-#3-0.3) and (#2+0.2,-#3-0.5).. (#2+0.5,-#3-0.5);
\draw[normal] (#2+0.5,-#3-0.5).. controls (#2+0.8,-#3-0.5) and (#2+1,-#3-0.3).. (#2+1,-#3);}
\newcommand{\ap}[3][-]{
\draw[normal,#1] (#2,-#3-1).. controls (#2,-#3-0.7) and (#2+0.2,-#3-0.5).. (#2+0.5,-#3-0.5);
\draw[normal] (#2+0.5,-#3-0.5).. controls (#2+0.8,-#3-0.5) and (#2+1,-#3-0.7).. (#2+1,-#3-1);}
\newcommand{\straight}[3][-]{\draw[normal,#1] (#2,-#3) -- (#2,-#3-1);}
\newcommand{\triup}[3]{
	\draw[normal] (#1,-#2)-- (#1+0.5,-#2-0.5);
	\draw[normal] (#1+1,-#2)-- (#1+0.5,-#2-0.5);
	\draw[normal] (#1+0.5,-#2-0.5)-- (#1+0.5,-#2-1);
	\node[map] at (#1+0.5,-#2-0.5) {#3};
}
\newcommand{\tridown}[3]{
	\draw[normal] (#1+0.5,-#2)-- (#1+0.5,-#2-0.5);
	\draw[normal] (#1+1,-#2-1)-- (#1+0.5,-#2-0.5);
	\draw[normal] (#1,-#2-1)-- (#1+0.5,-#2-0.5);
	\node[map] at (#1+0.5,-#2-0.5) {#3};
}
\newcommand{\comult}[2]{
	\draw[normal] (#1+0.5,-#2)--(#1+0.5,-#2-0.5);
	\ap{#1}{#2}
}
\newcommand{\mult}[2]{
	\up{#1}{#2}
	\draw[normal] (#1+0.5,-#2-0.5)--(#1+0.5,-#2-1);
}
\newcommand{\tik}[1]{\begin{tikzpicture}[baseline=(current bounding box.center)] #1 \end{tikzpicture} }
\numberwithin{equation}{section}
\newcommand{\ind}{\operatorname{Ind}}
\newcommand{\ca}{\circlearrowleft}
\newcommand{\mf}[1]{\mathfrak{#1}}
\newcommand{\id}{\operatorname{id}}
\newcommand{\h}{\hslash}
\newcommand{\ot}{\otimes}
\newcommand{\cO}{\mathcal O}
\newcommand{\cA}{\mathcal {A}}
\newcommand{\cS}{\mathcal {S}}
\newcommand{\kk}{\mathbb K}
\renewcommand{\hom}{\operatorname{Hom}}
\newcommand{\vect}{\operatorname{vect}}
\newcommand{\coev}{\operatorname{coev}}
\newcommand{\ev}{\operatorname{ev}}
\newtheorem{thm}{Theorem}[section]
\newtheorem{cor}[thm]{Corollary}
\newtheorem{prop}[thm]{Proposition}
\newtheorem{lem}[thm]{Lemma}
\theoremstyle{definition}
\newtheorem{rmk}[thm]{Remark}
\begin{document}


\newcommand{\arXivNumber}{1604.08450}

\renewcommand{\PaperNumber}{088}

\FirstPageHeading

\ShortArticleName{A Duf\/lo Star Product for Poisson Groups}

\ArticleName{A Duf\/lo Star Product for Poisson Groups}

\Author{Adrien BROCHIER}

\AuthorNameForHeading{A.~Brochier}

\Address{MPIM Bonn, Germany}
\Email{\href{mailto:abrochier@mpim-bonn.mpg.de}{abrochier@mpim-bonn.mpg.de}}
\URLaddress{\url{http://abrochier.org}}

\ArticleDates{Received May 18, 2016, in f\/inal form September 05, 2016; Published online September 08, 2016}

\Abstract{Let $G$ be a f\/inite-dimensional Poisson algebraic, Lie or formal group. We show that the center of the quantization of $G$ provided by an Etingof--Kazhdan functor is isomorphic as an algebra to the Poisson center of the algebra of functions on $G$. This recovers and generalizes Duf\/lo's theorem which gives an isomorphism between the center of the enveloping algebra of a f\/inite-dimensional Lie algebra $\mf a$ and the subalgebra of ad-invariant in the symmetric algebra of $\mf a$. As our proof relies on Etingof--Kazhdan construction it ultimately depends on the existence of Drinfeld associators, but otherwise it is a fairly simple application of graphical calculus. This shed some lights on Alekseev--Torossian proof of the Kashiwara--Vergne conjecture, and on the relation observed by Bar-Natan--Le--Thurston between the Duf\/lo isomorphism and the Kontsevich integral of the unknot.}

\Keywords{quantum groups; knot theory; Duf\/lo isomorphism}

\Classification{20G42; 17B37; 53D55}

\section{Introduction}

Let $\kk$ be a f\/ield of characteristic~0 and $G$ be a Lie, algebraic or formal group over $\kk$. A~multiplicative Poisson structure on $G$ is a Poisson structure such that the multiplication map $G\times G\rightarrow G$ is a Poisson map. This leads to the notion of Poisson Lie, Poisson algebraic or Poisson formal group depending on the context. If $\cO(G)$ is the Hopf algebra of $C^{\infty}$, regular or formal functions on $G$ then a multiplicative Poisson structure on $G$ turns $\cO(G)$ into a Poisson Hopf algebra.

A quantization of $G$ is a Hopf algebra which is a quantization of $\cO(G)$ as a Poisson algebra and whose coproduct reduces to the one of $\cO(G)$ at $\h=0$.

In~\cite{Etingof1996} Etingof--Kazhdan associate to any Drinfeld associator a functorial way to quantize Lie bialgebras and Poisson formal groups. Their construction can be applied to Poisson Lie and algebraic groups as well. Our main result is the following (Theorem~\ref{thm:main} below):
\begin{thm}\label{thm:intro}
	Let $\cO_\h(G)$ be the quantization of a finite-dimensional Poisson group $G$ obtained from an Etingof--Kazhdan functor. Then the center of $\cO_\h(G)$ is isomorphic as an algebra to the trivial $\kk[[\h]]$ extension of the Poisson center of $\cO(G)$.
\end{thm}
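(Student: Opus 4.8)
The plan is to first replace the center by a space of invariants and then to compute that space inside the graphical calculus that underlies the Etingof--Kazhdan construction. For any Hopf algebra $H$ one has $Z(H)=H^{\operatorname{ad}}$ as an algebra, where $\operatorname{ad}\colon H\otimes H\to H$, $a\otimes b\mapsto\sum a_{(1)}\,b\,S(a_{(2)})$, is the left adjoint action. Applying this to $\cO_\h(G)$ gives $Z(\cO_\h(G))=\cO_\h(G)^{\operatorname{ad}_\h}$ for the quantized adjoint action $\operatorname{ad}_\h$. At $\h=0$ the action $\operatorname{ad}_0$ is trivial because $\cO(G)$ is commutative, so every element is $\operatorname{ad}_0$-invariant; the first nontrivial constraint is that a central series $z=z_0+\h z_1+\cdots$ satisfies $0=[z,a]=\h\{z_0,a\}+O(\h^2)$ for all $a$, which already forces $z_0$ into the Poisson center $Z_{\mathrm{Poi}}(\cO(G))$. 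Thus $Z(\cO_\h(G))$ is an $\h$-adically complete, torsion-free $\kk[[\h]]$-algebra whose reduction modulo $\h$ is a subalgebra of $Z_{\mathrm{Poi}}(\cO(G))$, and the theorem amounts to two assertions: (i) this reduction is \emph{all} of $Z_{\mathrm{Poi}}(\cO(G))$ and the deformation is flat, and (ii) it is the \emph{trivial} algebra deformation.

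\textbf{The graphical computation.} For both points I would move into the graphical calculus: the Etingof--Kazhdan quantization is functorial and built from a Drinfeld associator, so that $\cO_\h(G)$, its structure maps and $\operatorname{ad}_\h$ are obtained by applying a morphism of props to universal string-diagrammatic data in the braided monoidal category attached to the associator, and it therefore suffices to prove (i)--(ii) at this universal level. In that picture $\cO_\h(G)^{\operatorname{ad}_\h}$ is computed by ``closing up'' a strand of the universal adjoint diagram, and the Etingof--Kazhdan PBW identification presents $\cO_\h(G)$ as $\cO(G)[[\h]]$ with only the product deformed. The key graphical input is a normal-form move showing that this quantum invariant-projection differs from the classical one by multiplication by an invertible Duflo-type element $J\in\cO(G)[[\h]]$ assembled out of the associator --- the analogue of $\partial(j^{1/2})$, and the place where the element hidden in the Kontsevich integral of the unknot reappears in the Bar-Natan--Le--Thurston sense. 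Once this is in place one reads off that $z_0\mapsto$ (the $J$-corrected symmetrization of $z_0$) lands in $Z(\cO_\h(G))$ and is onto, giving the isomorphism of $\kk[[\h]]$-modules $Z_{\mathrm{Poi}}(\cO(G))[[\h]]\longrightarrow Z(\cO_\h(G))$, which is assertion (i).

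\textbf{Multiplicativity, the main obstacle.} What remains is (ii): that under this map the commutative product of $Z(\cO_\h(G))$ corresponds to the undeformed, $\h$-linearly extended product of $Z_{\mathrm{Poi}}(\cO(G))$, with all higher $\h$-corrections cancelling. Graphically this is the statement that stacking two closed $\operatorname{ad}_\h$-invariant diagrams along a common strand lets the two $J$-factors slide freely past the multiplication and recombine into one, which should follow from the naturality of the braiding together with the pentagon and hexagon axioms by a short diagram chase. I expect this step to be the real content of the proof: it is exactly the incarnation, in this language, of the fact that wheeling --- equivalently the Duflo map --- is an algebra homomorphism, and it is the point at which the argument genuinely touches Alekseev--Torossian's treatment of the Kashiwara--Vergne problem. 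Everything else is bookkeeping with the Etingof--Kazhdan graphical calculus, which is why, granting that construction, the proof is short.
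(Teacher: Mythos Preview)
Your strategy is genuinely different from the paper's, and the parts you flag as ``the real content'' are precisely the parts you have not carried out.

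\medskip

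\textbf{What the paper actually does.} The paper never touches the Hopf-algebraic adjoint action $\operatorname{ad}_\h$. Instead it uses the \emph{classical} dressing action of $\mf g^*\subset\mf d$ on $\cO(G)$: by Proposition~\ref{prop:dressing} the Poisson center is exactly $\cO(G)^{\mf g^*}$, and by Frobenius reciprocity this is $\hom_\cA(M_-,\cO(G))$. Because $M_-$ carries its \emph{original} coproduct as a coalgebra in $\cA$ (Proposition~\ref{prop:coalg}) and $\cO(G)$ carries its \emph{original} product as a commutative algebra in $\cA$, the convolution product on $\hom_\cA(M_-,\cO(G))$ is literally the undeformed product of $\cO(G)^{\mf g^*}[[\h]]$. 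So multiplicativity is free: there is no Duflo correction at this stage. Two very short abstract lemmas about coalgebras $C_1,C_2$ and a commutative algebra $A$ in a braided category then show that the counit of $M_+$ gives an algebra map $\hom_\cA(M_-,\cO(G))\to\hom_\cA(M_+\ot M_-,\cO(G))=\cO_\h^{\ca}(G)$ landing in the center. The reverse inclusion is your $\h$-expansion argument. The Duflo-type element enters only afterwards, in transporting along the monoidal isomorphism $F^{\ca}\cong F$ (Proposition~\ref{prop:monoidal}), whose construction required the modified coevaluation of Proposition~\ref{prop:dual}.

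\medskip

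\textbf{Where your argument has gaps.} Your route through $\cO_\h(G)^{\operatorname{ad}_\h}$ forces you to compute with the deformed coproduct and antipode, which in the Etingof--Kazhdan picture are the most complicated pieces of data. The two claims you lean on---that ``closing up a strand of the universal adjoint diagram'' yields a projection differing from the classical one by an invertible element $J$, and that on invariants the $J$-factors ``slide past the multiplication and recombine''---are not proved; you say the first is a normal-form move and the second ``should follow from naturality of the braiding together with the pentagon and hexagon axioms by a short diagram chase,'' but neither chase is supplied. These are exactly the substantive statements (the EK analogue of the wheeling theorem), and they do not fall out of pentagon/hexagon alone; in the paper the corresponding nontrivial input is the careful duality analysis in Proposition~\ref{prop:dual}, which repairs a genuine error in the original Etingof--Kazhdan argument. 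Until you produce those two computations, the proposal is a plan rather than a proof.

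\medskip

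\textbf{Comparison.} The paper's trick---replace ``center'' by ``$\mf g^*$-dressing invariants,'' so that the relevant monoidal functor is $\hom_\cA(M_-,-)$ where everything is undeformed---bypasses entirely the step you identify as the main obstacle. What you expected to be the hard multiplicativity argument becomes a two-line graphical identity (Lemmas~\ref{lem:1}--\ref{lem:2}) valid for arbitrary cocommutative coalgebras and commutative algebras in a braided category. Your approach, if completed, would amount to a direct EK-internal proof of wheeling; that is interesting in its own right, but it is longer and harder than what the paper does.
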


If $\mf a$ is a f\/inite-dimensional Lie algebra, then $G=\mf a^*$ as an abelian group is a Poisson algebraic group, the Poisson structure on $\cO(G)=S(\mf a)$ being induced by the Lie bracket of $\mf a$. The Poisson center is identif\/ied with the sub-algebra of invariant $S(\mf a)^{\mf a}$. On the other hand one can show that the quantization of $G=\mf a^*$ can be specialized at $\h=1$, and becomes isomorphic to the enveloping algebra of $\mf a$. Hence we get Duf\/lo's theorem~\cite{Duflo1977} as a corollary:
\begin{cor}
Let $\mf a$ be a finite-dimensional Lie algebra, then there is an isomorphism of algebras
\begin{gather*}
	S(\mf a)^{\mf a}\cong U(\mf a)^{\mf a}.
\end{gather*}
\end{cor}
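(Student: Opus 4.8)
The corollary is obtained by applying Theorem~\ref{thm:intro} to the abelian algebraic group $G=\mf a^*$, and the plan is to spell out the three inputs and then specialize the conclusion at $\h=1$. First I would record the Poisson geometry of $\mf a^*$: the group law is addition, so $\cO(G)=S(\mf a)$ with the primitive coproduct $x\mapsto x\ot 1+1\ot x$ on $\mf a\subset S(\mf a)$, and the multiplicative Poisson bracket is the Kirillov--Kostant--Souriau one, i.e.\ the unique Poisson bracket extending $\{x,y\}=[x,y]$ for $x,y\in\mf a$. For such a bracket $\{z,\cdot\,\}$ is a derivation of $S(\mf a)$, hence determined by its values on the generators $\mf a$, where it acts as $-\operatorname{ad}$; therefore $z$ lies in the Poisson center if and only if $z\in S(\mf a)^{\mf a}$. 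This identifies the Poisson center of $\cO(G)$ with $S(\mf a)^{\mf a}$, as recalled above.

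Next I would invoke Theorem~\ref{thm:intro}: the center of $\cO_\h(G)$ is isomorphic, as a $\kk[[\h]]$-algebra, to the trivial extension $S(\mf a)^{\mf a}[[\h]]$. To convert this into a statement about $U(\mf a)$ I would use that the Etingof--Kazhdan quantization $\cO_\h(\mf a^*)$ is isomorphic, as a Hopf algebra, to the standard $\h$-adic enveloping algebra $U_\h(\mf a)=T(\mf a)[[\h]]/(x\ot y-y\ot x-\h[x,y])$, which carries the obvious $\kk[\h]$-form $\mathcal A=T(\mf a)[\h]/(x\ot y-y\ot x-\h[x,y])\cong \operatorname{Rees}(U(\mf a))$, with $\mathcal A/\h=S(\mf a)$ and $\mathcal A/(\h-1)=U(\mf a)$. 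Because $\h$ is central, centrality in $\mathcal A\subset U(\mf a)[\h]$ is detected "degree by degree", so $Z(\mathcal A)=\operatorname{Rees}\bigl(Z(U(\mf a))\bigr)$ is the Rees algebra of $U(\mf a)^{\mf a}$ for the PBW filtration, and $Z\bigl(U_\h(\mf a)\bigr)$ is its $\h$-adic completion. Reducing $Z(\mathcal A)$ modulo $(\h-1)$ thus gives $U(\mf a)^{\mf a}$, while the trivial-extension structure from Theorem~\ref{thm:intro} forces $Z(\mathcal A)$ to be a $\kk[\h]$-form of $S(\mf a)^{\mf a}[[\h]]$ on which the deformation is undeformed, so that reducing modulo $(\h-1)$ gives $S(\mf a)^{\mf a}$. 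Matching the two descriptions of $Z(\mathcal A)/(\h-1)$ yields the algebra isomorphism $S(\mf a)^{\mf a}\cong U(\mf a)^{\mf a}$.

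The step I expect to require the most care is this last specialization: one must check that "take the center" commutes with the passage to the $\kk[\h]$-form and with reduction modulo $(\h-1)$ (note that $\h-1$ is a unit in the $\h$-adic completion, so the reduction has to be performed on the integral form $\mathcal A$, not on $U_\h(\mf a)$), and --- crucially --- that the isomorphism of Theorem~\ref{thm:intro} can be chosen compatibly with these integral and graded structures, so that the fibre at $\h=1$ is literally $S(\mf a)^{\mf a}$ and not merely its associated graded $\operatorname{gr}_F\bigl(U(\mf a)^{\mf a}\bigr)$. This is where the gradedness and $\h$-torsion-freeness of $\operatorname{Rees}(U(\mf a))$, the flatness of $U_\h(\mf a)$ over $\kk[[\h]]$, and above all the triviality (as opposed to mere existence) of the deformation in Theorem~\ref{thm:intro} do the work; everything else --- unwinding the Poisson structure on $\mf a^*$, computing its Poisson center, and identifying $\cO_\h(\mf a^*)$ with $U_\h(\mf a)$ --- is routine or already in place from the constructions recalled above.
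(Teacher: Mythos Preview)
Your proposal is correct and follows essentially the same route as the paper: apply the main theorem to $G=\mf a^*$, identify the Poisson center with $S(\mf a)^{\mf a}$, identify the Etingof--Kazhdan quantization with the PBW/Rees quantization $U_\h(\mf a)$, and specialize at $\h=1$. Two small points of comparison: the paper justifies the identification $\cO_\h(\mf a^*)\cong U_\h(\mf a)$ by invoking a uniqueness result of Enriquez--Halbout for functorial quantizations of $\mf a^*$, which you take for granted; and the delicate step you flag---that the isomorphism of Theorem~\ref{thm:intro} respects the polynomial $\kk[\h]$-form so that one may set $\h=1$---is handled in the paper not abstractly but by unpacking Theorem~\ref{thm:intro} as Theorem~\ref{thm:Poisson} (the product on $\cO_\h^{\ca}$ is literally undeformed on the Poisson center) composed with the explicit isomorphism $\rho\colon\cO_\h^{\ca}\to\cO_\h$, and then observing that $\rho$ preserves $S(\mf a)[\h]$. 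Your Rees-algebra packaging is equivalent to the paper's PBW phrasing.
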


In fact there are two constructions of a quantization of $G$ in~\cite{Etingof1996}: the f\/irst one comes from a~natural construction of a certain f\/iber functor out of a coalgebra in a braided monoidal category constructed from a Drinfeld associator. This construction is not, however, functorial, and the authors then modify it in an appropriate way to make it so.

Our f\/irst result (Theorem~\ref{thm:Poisson}) is a fairly simple graphical proof that the restriction of the star product of the f\/irst construction to the Poisson center of $\cO(G)$ is undeformed, i.e., the same as the original product. This result alone guarantee the existence of a quantization of~$G$ fulf\/illing the conclusion of Theorem~\ref{thm:intro}.

The second step is to show that the two constructions yields isomorphic algebras (and in fact, isomorphic Hopf algebras). This isomorphism is higly non-trivial and depends on the underlying associator. Its existence relies on the fact that the two f\/iber functors at hand are isomorphic as monoidal functors. However, the proof of this fact given in~\cite{Etingof1996} appears to be incorrect: the isomorphism between those functors given there is not monoidal due to the contribution of certain non-trivial associativity constraints. Taking the associativity constraints into account and correcting this construction requires a detailed discussion of dualities in categories constructed from Drinfeld associators and a generalization of a result by Le--Murakami on the behaviour of the Kontsevich integral under taking parallels of tangles (Proposition~\ref{prop:dual}). In particular, one has to modify the ordinary coevaluation map in the Drinfeld category using a very specif\/ic element closely related to the Kontsevich integral of the unknot, which explains its relation with the diagrammatic Duf\/lo isomorphism of~\cite{MR1988280}.

For the sake of concreteness we work with Lie bialgebras in the category of vector spaces, but our proof also applies in the following situation: if $\mf g$ is a Lie bialgebra in a linear symmetric monoidal category $\cS$ which is dualizable as an object of $\cS$, then one can def\/ine the algebra of functions over the Poisson formal group of $\mf g$ as the object $\widehat{S}(\mf g^*)$ equipped with its standard multiplication and the coproduct induced by the Baker--Campbell--Hausdorf\/f formula. Then Theorem~\ref{thm:intro} is also valid in this setting. In particular we get a version of the Duf\/lo Theorem for dualizable Lie algebras in arbitrary symmetric monoidal categories.

We note that an analog of Theorem~\ref{thm:intro} holds for Kontsevich's deformation quantization of an arbitrary Poisson manifold $M$, which also implies Duf\/lo's theorem~\cite{Cattaneo2002,Kontsevich2003,Manchon2003}. While it is true for duals of f\/inite-dimensional Lie algebras, to the best of the author's knowledge it is not known whether Kontsevich's star product on a Poisson group seen as a Poisson manifold is isomorphic to the one coming from Etingof--Kazhdan construction (if for the former one uses Tamarkin's construction~\cite{Tamarkin1998, Tamarkin1999}, and if one chooses the same associator in both cases). An af\/f\/irmative answer to this question would thus give another proof of our result. Another motivation for this paper is Alekseev--Torrossian proof of the Kashiwara--Vergne conjecture~\cite{Alekseev2012}. Roughly speaking they show that solutions of this conjecture are essentially the same as universal twist-quantization \emph{\`a la} Drinfeld of duals of f\/inite-dimensional Lie algebras. The Kashiwara--Vergne conjecture implies (but is much stronger than) Duf\/lo's theorem. This raises the question of whether there is a direct proof of Duf\/lo's theorem in Etingof--Kazhdan formalism and whether it can be generalized to other Poisson groups. This paper is thus an af\/f\/irmative answer to this question.

\section{Poisson groups}
To any f\/inite-dimensional Lie algebra $\mf g$ over $\kk$ is associated a formal group, whose algebra of functions is by def\/inition the dual $U(\mf g)^*$ of the enveloping algebra. This is a topological Hopf algebra, which as an algebra is identif\/ied with the degree completion $\widehat{S}(\mf g^*)$ of the symmetric algebra of $\mf g^*$, and whose coproduct
\begin{gather*}
\Delta_0\colon \ \widehat{S}(\mf g^*)\longrightarrow \widehat{S}(\mf g^*)\hat\ot \widehat{S}(\mf g^*):=\widehat{S}(\mf g^*\times \mf g^*)
\end{gather*}
is given by
\begin{gather*}
	\Delta_0(f)(x,y):=f(\operatorname{BCH}(x,y)),
\end{gather*}
where $\operatorname{BCH}$ is the Baker--Campbell--Hausdorf\/f series.

If $G$ is a Lie or algebraic group with Lie algebra $\mf g$ then the algebra $\cO(G)$ of $C^\infty$ or regular functions on $G$ is a Hopf algebra with coproduct\footnote{In the Lie case, the tensor product is the completed one: $\cO(G)\hat\ot \cO(G):=\cO(G\times G)$.}
\begin{gather*}
	\Delta_0(f)(X,Y):=f(XY).
\end{gather*}
Let $I$ be the augmentation ideal of $\cO(G)$, i.e., the ideal of functions vanishing at the identity. Then the Hopf algebra of functions on the formal group associated with $\mf g$ is isomorphic to the $I$-adic completion of $\cO(G)$.

A multiplicative Poisson structure on a formal, Lie or algebraic group $G$ is a Poisson bracket on $\cO(G)$ for which the coproduct is a map of Poisson algebra. In that case we say that $\cO(G)$ is a Poisson-Hopf algebra.
A quantization of $G$ is an Hopf algebra $(\cO_\h(G),\star_\h,\Delta_\h)$ over the ring of formal power series $\kk[[\h]]$ such that:
\begin{itemize}\itemsep=0pt
	\item $\cO_\h(G)\cong \cO(G)[[\h]]$ as a $\kk[[\h]]$-module,
	\item $\forall\, f, g \in \cO_\h(G)$, $f\star_\h g - g\star_\h f=\h\{f,g\}+O(\h^2)$,
	\item $\Delta_\h=\Delta_0+O(\h)$.
\end{itemize}

A Lie bialgebra structure on $\mf g$ is a linear map $\delta\colon \mf g\rightarrow \wedge^2 \mf g$ such that the dual map is a Lie bracket on $\mf g^*$ and
\begin{gather*}
	\delta([a,b])=[a\ot 1+1\ot a,\delta(b)]+[\delta(a),b\ot 1+1\ot b].
\end{gather*}
A multiplicative Poisson structure on $G$ induces a Lie bialgebra structure on $\mf g$, and conversely if $\mf g$ is a Lie bialgebra then the cobracket induces a multiplicative Poisson structure on $G$~\cite{Drinfeld1987}.

Let $\mf d$ be the double of $\mf g$. As a vector space this is $\mf g \oplus \mf g^*$ and its Lie bracket is determined by the following conditions:
\begin{itemize}\itemsep=0pt
	\item the inclusions $\mf g\rightarrow \mf d$ and $\mf g^* \rightarrow \mf d$ are Lie algebra maps,
	\item the canonical pairing on $\mf d$ is ad-invariant.
\end{itemize}

By the PBW theorem, there is a vector space isomorphism
\begin{gather*}
	U(\mf g)\cong U(\mf d)/U(\mf d)\mf g^*,
\end{gather*}
which turns $U(\mf g)$ into a $\mf d$-module (this coincides with the universal Verma module $M_-$ of the next section) hence $\mf d$ acts on the algebra of function on the formal group of $\mf g$. If $\mf g$ is the Lie algebra of a Lie or algebraic group $G$, then this action is by vector f\/ields on $G$, hence it can be globalised to an action of $\mf d$ on $\cO(G)$ is the Lie and algebraic setting as well (this is the so-called dressing action~\cite{Semenov-Tian-Shansky1985}). A key fact for the constructions in the next sections is that the Poisson bracket on~$\cO(G)$ can be expressed as follows: let~$x_i$ be a basis of $\mf g$ and $x^i$ be the dual basis of~$\mf g^*$, then
\begin{gather*}
	\forall\, f,g \in \cO(G),\qquad \{f,g\}=\sum_i \big(x^i\cdot f\big)(x_i\cdot g).
\end{gather*}
In particular, the following holds~\cite{Semenov-Tian-Shansky1985}:
\begin{prop}\label{prop:dressing}
The Poisson center of $\cO(G)$ coincides with the subalgebra $\cO(G)^{\mf g^*}$ of invariant under the dressing action of $\mf g^* \subset \mf d$.
\end{prop}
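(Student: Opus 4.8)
The plan is to read the proposition off the bracket formula $\{f,g\}=\sum_i(x^i\cdot f)(x_i\cdot g)$ recorded just above, keeping in mind that $\{x^i\}$ is a basis of $\mf g^*$, so that $f\in\cO(G)^{\mf g^*}$ is equivalent to $x^i\cdot f=0$ for every $i$. The inclusion $\cO(G)^{\mf g^*}\subseteq Z$, where $Z$ denotes the Poisson center, is then immediate: if every $x^i\cdot f$ vanishes, each summand of $\sum_i(x^i\cdot f)(x_i\cdot g)$ vanishes, so $\{f,g\}=0$ for all $g\in\cO(G)$.

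For the reverse inclusion I would argue as follows. Suppose $f\in Z$. Setting $h_i:=x^i\cdot f\in\cO(G)$ and letting $D_i$ denote the derivation $g\mapsto x_i\cdot g$ of $\cO(G)$, the hypothesis $\{f,g\}=0$ for all $g$ says precisely that the operator $\sum_i h_iD_i$ kills every element of $\cO(G)$, hence is the zero derivation. It therefore suffices to know that the $D_i$ are linearly independent over $\cO(G)$. This is where the nature of the $\mf g$-action enters: from $U(\mf g)\cong U(\mf d)/U(\mf d)\mf g^*$ one sees that $\mf g\subset\mf d$ acts on $U(\mf g)$ by left multiplication, so that $\mf g$ acts on $\cO(G)$ as the infinitesimal (left) translation action; equivalently, $G\cong D/G^*$ is homogeneous under the double group $D$ and $\mf g$, being a vector-space complement to the isotropy subalgebra $\mf g^*$ at the base point, is carried onto a global frame of $TG$. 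In either description the fundamental vector fields $D_i$ form an $\cO(G)$-basis of the (locally) free module of vector fields on $G$, so $\sum_i h_iD_i=0$ forces every $h_i=0$ — in the algebraic or Lie setting by evaluating at an arbitrary point and letting $g$ range over local coordinates there, and in the formal setting by the same argument at the identity together with translation-invariance (or by reduction to the Lie case and $I$-adic completion). Hence $x^i\cdot f=0$ for all $i$, i.e.\ $f\in\cO(G)^{\mf g^*}$. That both sides are subalgebras is clear, invariants of an action by derivations and the Poisson center each being closed under multiplication.

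The single delicate point is the $\cO(G)$-linear independence of the $D_i$ — equivalently, that $\mf g\subset\mf d$ splits the anchor map of the $\mf d$-action at the base point and hence, by homogeneity, everywhere — so I would take care to spell out the identification of $\mf g\subset\mf d$ with infinitesimal translations. Granting that, the proposition falls out of the bracket formula with essentially no further computation.
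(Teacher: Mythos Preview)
Your argument is correct. The paper itself does not supply a proof of this proposition: it simply attributes the result to Semenov-Tian-Shansky~\cite{Semenov-Tian-Shansky1985} and moves on. What you have written is therefore not a comparison target but a self-contained justification, and it is a sound one.

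The only point worth tightening is exactly the one you flag. Your identification of the $\mf g\subset\mf d$-action with infinitesimal translations is right and follows directly from the description $M_-\cong U(\mf d)/U(\mf d)\mf g^*$: for $x\in\mf g$ and $u\in U(\mf g)$ one has $x\cdot u=xu$ in $U(\mf g)$, so dually $\mf g$ acts on $\cO(G)$ by (right-)invariant vector fields. These trivialize $TG$, hence are an $\cO(G)$-basis of derivations in all three settings (formal, Lie, algebraic), and the vanishing $\sum_i h_i D_i=0$ forces $h_i=0$ as you say. With that sentence made explicit your proof is complete and arguably more informative than the bare citation in the paper, since it shows precisely how the statement is a formal consequence of the bracket formula together with the fact that $\mf g$ is complementary to the isotropy $\mf g^*$ inside~$\mf d$.
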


\section[Two quantizations of $G$]{Two quantizations of $\boldsymbol{G}$}
In this section we recall the main construction of~\cite{Etingof1996}, in a slightly dif\/ferent form inspired by~\cite{Severa2016}. Let $G$ be a Poisson group, $\mf g$ its Lie bialgebra and $\mf d$ the Drinfeld double of $\mf g$ and $t \in \mf d^{\ot 2}$ the canonical element.

\subsection{The Drinfeld category}

Let $\cA$ be the category whose objects are $\mf d$-modules and whose morphisms are def\/ined by
\begin{gather*}
	\hom_\cA(U,V):=\hom_{\mf d}(U,V)[[\h]].
\end{gather*}

Let $\Phi$ be a Drinfeld associator over $\kk$. Recall that this is a group-like element of the formal completion $\kk\langle\langle X,Y\rangle\rangle$ of the free associative algebra on two generators, satisfying the pentagon and the hexagon equation~\cite{Drinfeld1990a}. Let $\tilde\Phi$ be the image of $\Phi$ through the algebra morphism $\kk\langle\langle X,Y\rangle\rangle \rightarrow U(\mf d)^{\ot 3}[[\h]]$ induced by $X\mapsto \h t\ot 1$ and $Y\mapsto 1\ot \h t$.
	\begin{thm}[\cite{Drinfeld1990,Drinfeld1990a}]
For any $U,V,W \in \cA$ define a map
\begin{gather*}
	\alpha_{U,V,W}\colon \ (U\ot V)\ot W \rightarrow U\ot (V \ot W)
\end{gather*}
by the action of $\tilde \Phi$ and a map
\begin{gather*}
	\beta_{U,V}\colon \ U\ot V \rightarrow V \ot U
\end{gather*}
by $\beta_{U,V}=\exp(\h t/2)\circ P_{U,V}$ where $P_{U,V}(u\ot v)=v\ot u$. Then $\cA$ with its ordinary tensor product, associativity constraint~$\alpha$ and commutativity constraint $\beta$ is a braided tensor category.
\end{thm}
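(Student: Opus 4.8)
The plan is to recognize this as Drinfeld's theorem \cite{Drinfeld1990,Drinfeld1990a} and to deduce the axioms of a braided tensor category for $(\cA,\ot,\alpha,\beta)$ directly from the pentagon and hexagon properties of $\Phi$, using that $t$ is the canonical element of the double. I would organize the argument in three layers: well-definedness and naturality of the structure maps; the relation satisfied by the ``cabled'' copies of $t$; and the translation of the pentagon/hexagon identities.

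First I would check that all the structure maps make sense and are natural isomorphisms. Since $t$ carries no power of $\h$ while $\Phi=1+(\text{terms of positive bracket-degree})$, the image $\tilde\Phi$ lies in $1+\h\,U(\mf d)^{\ot 3}[[\h]]$; hence the a priori infinite sum defining $\tilde\Phi$ converges $\h$-adically and $\tilde\Phi$ is invertible. Likewise $\exp(\h t/2)\in 1+\h\,U(\mf d)^{\ot 2}[[\h]]$ is invertible, so each $\beta_{U,V}$ is an isomorphism. Naturality of $\alpha$ and $\beta$ is immediate: a morphism of $\cA$ is a formal power series of $\mf d$-linear maps, hence commutes with the action of any fixed element of $U(\mf d)^{\ot n}[[\h]]$, in particular with $\tilde\Phi$ and with $\exp(\h t/2)$; and the flip $P_{U,V}$ is visibly natural with respect to $\mf d$-linear maps. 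The unit object is the trivial module $\kk$, on which every tensor factor of $t$ acts by zero, so the unit constraints are trivial and the triangle axiom holds automatically (the relevant specializations of $\Phi$ being $1$).

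The crux is the infinitesimal braid relations. The canonical element $t=\sum_i(x_i\ot x^i+x^i\ot x_i)\in\mf d^{\ot 2}$ is symmetric and $\mf d$-invariant, the latter being exactly the ad-invariance of the pairing built into the definition of $\mf d$. Writing $t_{ij}$ for $t$ acting on the $i$-th and $j$-th factors of a tensor power of $\mf d$-modules, symmetry and invariance give $[t_{ij},t_{kl}]=0$ when $\{i,j\}\cap\{k,l\}=\emptyset$ and $[t_{ij}+t_{ik},t_{jk}]=0$. Consequently there is a well-defined morphism from the degree-completed universal enveloping algebra of the Drinfeld--Kohno Lie algebra $\mf t_n$ into $U(\mf d)^{\ot n}[[\h]]$ sending the generator $t_{ij}$ to $\h t_{ij}$, and it is compatible with the cabling and partial-composition maps $\mf t_m\to\mf t_n$. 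Now the pentagon equation for $\Phi$ is an identity in (a completion of) $U(\mf t_4)$ and the two hexagon equations are identities in $U(\mf t_3)$; pushing them forward along this morphism and evaluating on $(U\ot V)\ot W$, resp.\ on $U\ot V$, yields precisely the pentagon axiom for $\alpha$ and the two hexagon axioms for $\beta=\exp(\h t/2)\circ P$, using $\beta_{U,V}^{-1}=P_{V,U}\circ\exp(-\h t/2)$ and the $\mf d$-invariance of $t$ to move $P$'s past the relevant factors.

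The main obstacle is bookkeeping rather than conceptual: one must fix the conventions identifying the free generators $X,Y$ of $\kk\langle\langle X,Y\rangle\rangle$ with $t_{12},t_{23}\in\mf t_3$, check that the substitution of the statement is compatible with that identification on all of $\mf t_3$ and $\mf t_4$, and verify that every permuted and cabled variant of $\Phi$ occurring in the pentagon and hexagon relations maps to the corresponding composite of $\alpha$'s and identity maps under the functoriality of the construction (and similarly that $\exp(\h t/2)$ on the various pairs of factors corresponds to the appropriate $\beta$). Once these identifications are pinned down, the axioms hold on the nose, and no further deformation-theoretic input is needed beyond the $\h$-adic convergence already noted.
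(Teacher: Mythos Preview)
Your outline is correct and is precisely the standard argument from the cited references \cite{Drinfeld1990,Drinfeld1990a}: symmetry and $\mf d$-invariance of the canonical element $t$ yield the infinitesimal braid relations, so the pentagon and hexagon identities for $\Phi$ push forward to the corresponding coherence axioms for $\alpha$ and $\beta$, while $\h$-adic convergence and naturality are immediate. The paper itself does not give a proof of this statement but simply cites Drinfeld, so there is nothing further to compare.
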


We will need a few results about duality in $\cA$. Note that $\cA$ is not a rigid category since we allow inf\/inite-dimensional modules, for which the coevaluation involves inf\/inite sums, and in particular not every map in $\cA$ has a well def\/ined transpose. However, it is easily checked that in all our computations involving duality, only f\/initely many terms of the coevaluation map will contribute. The existence of a suitable rigid structure on the sub-category of $\cA$ consisting of f\/inite-dimensional modules is discussed in~\cite{Cartier1993,Drinfeld1990,Kassel1998}, and in a dif\/ferent language but using a~dif\/ferent normalization that we will need in~\cite{Le1995,Le1997}.

Note that for a $\mf d$-module $V$, the ordinary evaluation and coevaluation are morphisms in $\cA$ but the presence of the non-trivial associativity constraints implies that they fail to satisfy the zig-zag identity required in order to def\/ine a duality in a monoidal category. Namely, write $\tilde \Phi=\sum_i x_i \ot y_i \ot z_i$ and set
\begin{gather*}
	\nu=\bigg(\sum_i x_iS(y_i)z_i\bigg)^{-1} \in U(\mf d)^{\mf d}[[\h]],
\end{gather*}
where $S$ is the antipode of $U(\mf d)$. Then the action of $\nu$ induces an automorphism of the identity functor in $\cA$ so that for any $V \in \cA$
\begin{gather*}
	(\id \ot \ev)\circ	\alpha_{V,V^*,V}\circ (\coev \ot \id)=\nu_V^{-1},
\end{gather*}
where $\ev$ (resp.~$\coev$) is the $\kk[[\h]]$-linear extension of the standard map $V^* \ot V \rightarrow \kk$ (resp.\ $\kk\rightarrow V \ot V^*$).
\begin{rmk}
	The element $\nu$ is essentially a specialization of the Kontsevich integral of the unknot, and can be shown to be independent of the choice of the associator $\Phi$~\cite{Le1996}.
\end{rmk}
For any $\alpha,\beta \in U(\mf d)^{\mf d}[[\h]]$ of the form $1+O(\h)$ and such that $\alpha \beta=\nu$ the maps
\begin{gather*}
	\tik{\up[<-]{0}{0}} :=	\ev\circ	(\id \ot \alpha_V),\qquad  \tik{\ap[<-]{0}{0}} :=\coev\circ (\beta_V \ot \id)
\end{gather*}
induces a duality in $\cA$. Note that for any choice of $\alpha$, $\beta$ the dual of $V$ is its dual as a $\mf d$-module and the dual of a map is also its ordinary dual. As objects in $\cA$ we have
\begin{gather*}
	(V\ot W)^* = W^* \ot V^*
\end{gather*}
but both sides are dual to $V\ot W$ in an \emph{a priori} dif\/ferent way. In other words, the chosen duality induces a canonical automorphism
\begin{gather*}
	(V\ot W)^* = W^* \ot V^*\longrightarrow W^* \ot V^*
\end{gather*}
given by
\begin{gather*}
	\tik{
		\straight{0}{0} \ap{2}{0}
		\begin{scope}[xscale=3,yscale=1.5]
			\ap{0.3333}{-0.333}
		\end{scope}
		\begin{scope}[xscale=1.5]
			\up{0}{1}
		\end{scope}
		\node[rectangle,draw,fill=white] at (1.5,-1) {$W^* \ot V^*$};
		\node at (0,0.5) {$(V\ot W)^*$};
		\node at (3,-2.5) {$W^* $};
		\node at (4,-2.5) {$V^*$};
		\straight{3}{1}\straight{4}{1}
		}
\end{gather*}
which fails to be the identity in general. This fact is closely related to the failure of the Kontsevich integral to be compatible with the operation of taking parallel for arbitrary tangles, which has been investigated by Le--Murakami~\cite{Le1997}, who show the following result: since $\nu=1+O(\h)$ it has a unique square root $\nu^{\frac12}$ of the same form. Recall that an associator $\Phi$ is called even if it satisf\/ies
\begin{gather*}
	\Phi(-X,-Y)=\Phi(X,Y).
\end{gather*}
\begin{thm}[Le--Murakami]\label{thm:LM}
	If one uses an even associator in the construction of $\cA$, and set $\alpha=\beta=\nu^{\frac12}$, then the canonical isomorphism
	\begin{gather*}
		(V \ot W)^*\cong W^* \ot V^*
	\end{gather*}
	is the identity.
\end{thm}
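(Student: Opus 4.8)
The plan is to unwind the definition of the canonical automorphism $(V\ot W)^*=W^*\ot V^*\to W^*\ot V^*$ displayed above into a product of associativity and braiding constraints, and then to show that when $\alpha=\beta=\nu^{1/2}$ and $\Phi$ is even, all the associator contributions collapse to scalars $\nu^{\pm 1/2}$ acting on various tensor factors, whose product is the identity. First I would fix bases and write everything in terms of the action of $\tilde\Phi$: the maps $\mathrm{ev}_V$ and $\mathrm{coev}_V$ are the ordinary ones, and the ``crooked'' evaluation/coevaluation used to dualize are $\mathrm{ev}\circ(\id\ot\nu^{1/2}_V)$ and $\mathrm{coev}\circ(\nu^{1/2}_V\ot\id)$. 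The composite in the display is built by feeding the bent strand for $(V\ot W)^*$ through one crooked coevaluation on $V\ot W$, reassociating so that $V$ and $W$ sit adjacent to their own dual strands $V^*$, $W^*$, and then applying two crooked evaluations, one pairing $V$ with $V^*$ and one pairing $W$ with $W^*$. The key computation is to track which associativity morphisms $\alpha$ (i.e. which instances of $\tilde\Phi$) intervene between the single coevaluation and the two evaluations.

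The heart of the argument is a bookkeeping identity at the level of $U(\mathfrak d)$-module maps. Because $\nu\in U(\mathfrak d)^{\mathfrak d}[[\h]]$ is central for the $\mathfrak d$-action, $\nu^{1/2}_V$ commutes with every structural map, so the three crooked corners contribute a total factor $\nu^{-1/2}$ on the $V$-line, $\nu^{-1/2}$ on the $W$-line (from the coevaluation, split between the two halves), and $\nu^{-1/2}$ from each evaluation — the precise count being exactly what makes the defining relation $(\id\ot\mathrm{ev})\circ\alpha_{V,V^*,V}\circ(\mathrm{coev}\ot\id)=\nu_V^{-1}$ balance out. What remains after cancelling the $\nu$-powers is a composite of pure associativity constraints applied to a configuration of four strands $V, W, W^*, V^*$, and I would argue this composite equals the identity precisely when the associator is even: the evenness $\Phi(-X,-Y)=\Phi(X,Y)$ is exactly the symmetry needed so that the $\tilde\Phi$-factor picked up when sliding $W$ past $V^*$ to pair with $W^*$ is cancelled by its mirror image, rather than leaving a residual non-symmetric term. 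This is the same mechanism identified by Le--Murakami: for a general associator the parallel-of-tangle operation fails to commute with the Kontsevich integral precisely by these leftover associator terms, and evenness is what kills them.

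Concretely, I would reduce to checking the identity on the one generating situation — $V$ and $W$ one-dimensional, or equivalently working formally in the universal algebra $U(\mathfrak d)^{\otimes n}[[\h]]$ — so that the statement becomes an equation among group-like elements built from $\Phi$. Then I would invoke the pentagon to normalize the chain of reassociations into a standard form, use the hexagon only if the braiding $\beta$ sneaks in (it should not, since the automorphism in question is defined purely via the duality maps and associators, no crossings), and finally apply $\Phi(-X,-Y)=\Phi(X,Y)$ together with $\alpha\beta=\nu$ with $\alpha=\beta=\nu^{1/2}$ to see the two sides agree. The main obstacle I anticipate is the correct accounting of \emph{which} tensor slots each $\tilde\Phi$ acts on and in which order the reassociations must be performed — i.e. drawing the right sequence of pentagon moves — since a miscount there produces a spurious $\nu^{\pm 1/2}$ or a leftover associator; once the diagram is set up correctly the evenness hypothesis does the rest essentially by inspection.
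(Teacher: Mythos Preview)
The paper does not prove this theorem: it is quoted as a result of Le--Murakami with a citation to their paper on the parallel version of the Kontsevich invariant, and is then used as a black box in the proof of Proposition~\ref{prop:dual}. So there is no proof in the paper to compare your proposal against.

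That said, your outline is headed in the right direction but is not yet a proof, and the route Le--Murakami actually take is rather different from what you sketch. They do not compute directly in $U(\mathfrak d)^{\otimes n}[[\hslash]]$; they work at the level of the universal Vassiliev--Kontsevich invariant of framed tangles, i.e.\ in algebras of chord diagrams, and prove that with an even associator and the $\nu^{1/2}$ normalisation at extrema the invariant commutes with the cabling (parallel) operation. The statement here is the specialisation of that cabling compatibility to the elementary tangle underlying the canonical map $(V\otimes W)^*\to W^*\otimes V^*$. Their argument is combinatorial/topological and treats all the associator bookkeeping uniformly; your approach would have to redo that bookkeeping by hand for this one diagram.

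Two concrete gaps in your sketch. First, your $\nu$-count is off: the coevaluation for $V\otimes W$ contributes a single $\nu^{1/2}$ acting on $V\otimes W$ as one object, not one $\nu^{1/2}$ on each factor, while the two evaluations contribute $\nu^{1/2}$ on $V$ and on $W$ separately; reconciling $\nu^{1/2}_{V\otimes W}$ with $\nu^{1/2}_V\otimes\nu^{1/2}_W$ already requires the associator (this is where the relation to the Kontsevich integral of the unknot enters) and is not a triviality. Second, the way evenness is used is more delicate than ``sliding $W$ past $V^*$'': when a strand is dualised the relevant $t$-operator changes sign, so the associators one meets are of the form $\Phi(\pm t_{12},\pm t_{23})$ with mixed signs, and it is the combination of evenness with the pentagon that forces the residual product to be central and then equal to $1$ after the $\nu^{1/2}$ normalisation. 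Your reduction to ``$V$ and $W$ one-dimensional'' does not make sense here (the $t$-action would then be trivial); the correct universal setting is exactly the chord-diagram algebra Le--Murakami work in. If you want a self-contained algebraic proof along your lines, you will need to write the automorphism explicitly as a word in $\tilde\Phi^{\pm1}$ on four strands, keep track of which strands are starred, and then carry out the pentagon-and-evenness cancellation carefully; this is doable but is essentially reproving the Le--Murakami lemma in a different language.
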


Using this result, we prove:
\begin{prop}\label{prop:dual}
For any choice of an associator, there exists $\alpha$, $\beta$ such that for the corresponding duality, the canonical isomorphism
\begin{gather*}
		(V \ot W)^*\cong W^* \ot V^*
\end{gather*}
is the identity.
\end{prop}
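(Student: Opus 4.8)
The plan is to reduce the statement to Le--Murakami's Theorem~\ref{thm:LM} for even associators, the role of the extra freedom in $\alpha$, $\beta$ being to absorb the ``non-even part'' of an arbitrary associator.

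First I would make precise in what sense the canonical automorphism of $W^*\ot V^*$ is universal. It is natural in $V$ and in $W$ (with respect to all $\mf d$-module maps), and since both $(V\ot W)^*$ and $W^*\ot V^*$ are assembled from the \emph{ordinary} duals of $\mf d$-modules it is compatible with the monoidal structure; hence it is the action of a single element $u=u^{\alpha,\beta}_\Phi$ of a suitable completion of $\big(U(\mf d)^{\ot 2}\big)^{\mf d}[[\h]]$ of the form $1+O(\h)$, depending only on $\Phi$ and on the chosen $\alpha$, $\beta$. Feeding a triple tensor product into the defining diagram and applying the zig-zag identities shows that $u$ satisfies a normalized $2$-cocycle identity (twisted by the associators, hence the honest cobar cocycle identity modulo $\h$) in the cobar complex of the coalgebra $U(\mf d)$. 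I would then record the residual freedom: any admissible pair differs from a fixed one by $(\alpha c,\beta c^{-1})$ with $c\in Z(U(\mf d))[[\h]]$, $c=1+O(\h)$, and a short diagram chase shows that this replacement multiplies $u$ by the coboundary $(c\ot c)^{-1}\Delta(c)$ (up to inverse and the flip, which are immaterial here). Thus it suffices to prove that for the symmetric choice $\alpha=\beta=\nu^{\frac12}$ the resulting $u$ is such a coboundary.

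For an even associator this is exactly Theorem~\ref{thm:LM}, which gives $u=1$. For a general associator $\Phi$ I would use Drinfeld's description of the set of associators (or a direct gauge argument) to connect $\Phi$ to an even associator $\Phi_e$, which in $\cA$ becomes a $\mf d$-invariant twist $J=1+O(\h)$ of the tensor functor with $J^{21}=J$, so that the $\Phi$-independent braiding $\exp(\h t/2)\circ P$ is preserved. Transporting the duality data of $\cA_{\Phi_e}$ through $J$ and comparing with the duality of $\cA_\Phi$ built from $\alpha=\beta=\nu^{\frac12}$ — keeping careful track of the evaluation and coevaluation maps, and in particular of $\nu_{V\ot W}$ versus $\nu_V\ot\nu_W$, which is precisely the bookkeeping already visible in the discussion preceding Theorem~\ref{thm:LM} — expresses $u^{\nu^{1/2},\nu^{1/2}}_\Phi$ as $u^{\nu^{1/2},\nu^{1/2}}_{\Phi_e}=1$ times the coboundary associated with an explicit partial evaluation $c$ of $J$ (a central element because $t$ is $\mf d$-invariant). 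Combining with the previous paragraph, the pair $\alpha=\nu^{\frac12}c^{-1}$, $\beta=\nu^{\frac12}c$ (or its mirror) yields $u=1$, i.e.\ the canonical isomorphism $(V\ot W)^*\cong W^*\ot V^*$ is the identity.

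The main obstacle is this last step: the passage from \emph{even} to \emph{arbitrary} associators is not formal. One cannot simply quote Le--Murakami, and the point to verify is that the non-even part of $\Phi$ contributes to $u$ only through coboundaries — equivalently, that the class of $u_\Phi$ in $H^2$ of the relevant $\mf d$-invariant cobar/deformation complex is associator-independent and therefore vanishes, as it does at the even point (this is of a piece with the associator-independence of $\nu$ recorded above). This is also why the conclusion is ``there exist $\alpha$, $\beta$'' rather than ``$\alpha=\beta=\nu^{\frac12}$ always works''. Carrying it out requires the detailed analysis of dualities in categories built from Drinfeld associators, and of the interaction of taking parallels with the Kontsevich integral, that generalizes the Le--Murakami computation.
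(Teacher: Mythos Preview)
Your overall strategy matches the paper's: invoke Le--Murakami for an even associator $\Phi_e$, connect an arbitrary $\Phi$ to $\Phi_e$ by a symmetric $\mf d$-invariant universal twist $J$, and then absorb the discrepancy into the freedom $(\alpha,\beta)\mapsto(\alpha c,\beta c^{-1})$. The paper's execution, however, is more direct and supplies precisely the step you flag as the ``main obstacle''. Rather than setting up a cocycle/coboundary analysis of $u_\Phi^{\alpha,\beta}$, the paper simply transports the duality of $\cA_{\Phi_e}$ (with $\alpha'=\beta'=\nu^{1/2}$) through the monoidal equivalence given by $J$; this produces explicit elements $\alpha_F=\sum S(f_i)\nu^{1/2}g_i$ and $\beta_F=\sum \bar f_i\nu^{1/2}S(\bar g_i)$, and the canonical isomorphism for this transported duality is then computed to be the action of $(J_{V,W})^*J_{W^*,V^*}$, i.e.\ of $(S\ot S)(J)\cdot J$ on $W^*\ot V^*$ (using that $J$ is symmetric).

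The key idea you are missing---and which replaces your unproven ``the class of $u_\Phi$ in $H^2$ is associator-independent''---is an antipode argument combined with Drinfeld's uniqueness of universal twists. Since any associator is group-like, $(S\ot S\ot S)(\Phi)=\Phi^{-1}$; hence $(S\ot S)(J^{-1})$ satisfies the \emph{same} twist equation as $J$. Because $J$ is the specialization of a universal twist, Drinfeld's result then yields an invertible central $u$ with $(S\ot S)(J)=(u^{-1}\ot u^{-1})\Delta(u)J^{-1}$, so the canonical isomorphism above is exactly the coboundary $(u^{-1}\ot u^{-1})\Delta(u)$, and one finishes by setting $\alpha=\alpha_F u$, $\beta=\beta_F u^{-1}$. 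Your proposed mechanism---a ``partial evaluation of $J$'' or bookkeeping with $\nu_{V\ot W}$ versus $\nu_V\ot\nu_W$---is not what does the work, and no generalized cabling analysis is needed.
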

\begin{proof}
	Let $\Phi$, $\Phi'$ be two associators and $\cA$, $\cA'$ the braided monoidal categories constructed from them. By~\cite{Drinfeld1990a,Le1996} there exists a strong monoidal structure $J$ on the identity functor inducing a~braided monoidal equivalence
\begin{gather*}
\cA' \simeq \cA.
\end{gather*}
The morphism $J$ is given by the action of a symmetric, $\mf d$-invariant element in $U(\mf d)^{\ot 2}[[\h]]$, still denoted by~$J$, which is given by a~universal formula, i.e., can be chosen independently of $\mf d$.

In particular, if one chooses any even associator $\Phi'$ one can transport the duality from~$\cA'$ to~$\cA$ using this equivalence. Namely we def\/ine
\begin{gather*}
	\tik{\up[<-]{0}{0}} :=	\ev\circ J_{V^*,V} 	\circ \big(\id \ot \nu^{\frac12}_V\big),\qquad  \tik{\ap[<-]{0}{0}} :=\coev\circ J^{-1}_{V,V^*} \circ \big(\nu^{\frac12}_V \ot \id\big).
\end{gather*}
Write
\begin{gather*}
	J =\sum f_i \ot g_i,\qquad J^{-1} =\sum \bar f_i \ot \bar g_i,
\end{gather*}
then this duality corresponds to the choice
\begin{gather*}
	\alpha_F=\sum S(f_i)\nu^{\frac12}g_i
\end{gather*} and
\begin{gather*}
	\beta_F=\sum \bar f_i \nu^{\frac12} S(\bar g_i)
\end{gather*}
(see, e.g.,~\cite{Drinfeld1990}).

For this choice of duality, the isomorphism $W^* \ot V^*\rightarrow (V\ot W)^* $ is given by
\begin{gather}\label{eq:dualTwist}
	(J_{V,W})^*J_{W^*,V^*},
\end{gather}
where we used the fact that by Theorem~\ref{thm:LM} the analog morphism is the identity in $\cA'$. Now since any associator $\Phi$ is group-like, we have $(S\ot S\ot S)(\Phi)=\Phi^{-1}$. Therefore, the element $(S \ot S)(J^{-1})$ satisf\/ies the same twist equation as $J$, and since we choose $J$ to be the specialization of an universal twist it follows from~\cite{Drinfeld1990a} that there exists an invertible central element $u \in U(\mf d)[[\h]]$ such that
\begin{gather*}
	(S\ot S)(J)=\big(u^{-1}\ot u^{-1}\big)\Delta(u)J^{-1}.
\end{gather*}
Now by def\/inition $(J_{V,W})^*$ is given by the action of $(S\ot S)(J)$, and using the fact that $J$ is symmetric, the map~\eqref{eq:dualTwist} is given by the action of $(u^{-1}\ot u^{-1})\Delta(u)$. Finally, setting
\begin{gather*}
	\alpha =\alpha_F u,\qquad \beta =\beta_Fu^{-1}
\end{gather*}
makes this map trivial.
\end{proof}

\subsection{Two f\/iber functors}

Let $M_+=\ind_{\mf g}^{\mf d} 1$ and $M_-=\ind_{\mf g^*}^{\mf g} 1$ be the universal Verma modules. By the PBW theorem as vector spaces $M_+\cong U(\mf g^*)$ and $M_-\cong U(\mf g)$, which implies that $M_{\pm}$ are coalgebras. We denote by $1_\pm$ the image of $1$ through those isomorphisms. Def\/ine functors $F^{\circlearrowleft}$ and $F$ from $\cA$ to the category $\vect_\h$ of topologically free $\kk[[\h]]$ modules by
\begin{gather*}
	F^{\circlearrowleft}(V) =\hom_\cA(M_+ \ot M_-, V), \qquad 	F(V) =\hom_\cA(M_+, V\ot M_-^*).
\end{gather*}
\begin{prop}
	The functors $F^\ca$ and $F$ are naturally isomorphic to the forgetful functor sending a $\mf d$-module $V$ to $V[[\h]]$ via
	\begin{gather*}
		f\longmapsto f(1_+ \ot 1_-)
	\end{gather*}
	and
	\begin{gather*}
		f \longmapsto (\id\ot 1_-)f(1_+)
	\end{gather*}
	respectively.

\end{prop}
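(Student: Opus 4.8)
The plan is to exhibit explicit inverse maps between $F^\ca(V)$ (resp.\ $F(V)$) and $V[[\h]]$ and check naturality. For $F^\ca$, the proposed map $\Theta^\ca_V\colon f\mapsto f(1_+\ot 1_-)$ is $\kk[[\h]]$-linear and natural in $V$ essentially by construction, since for $\phi\colon V\to V'$ in $\cA$ one has $(\phi\circ f)(1_+\ot 1_-)=\phi(f(1_+\ot 1_-))$. So the real content is to see that $\Theta^\ca_V$ is a bijection. For this I would use the adjunctions defining the universal Verma modules: $M_+=\ind_{\mf g}^{\mf d}1$ and $M_-=\ind_{\mf g^*}^{\mf g}1$ are generated over $U(\mf d)$ (resp.\ $U(\mf g)$) by $1_+$ (resp.\ $1_-$), so a $\mf d$-module map out of $M_+\ot M_-$ is determined by the image of $1_+\ot 1_-$. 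The point is then to identify the image: since we are working in $\cA$ with morphisms being $\mf d$-linear maps tensored with $\kk[[\h]]$, and the associativity constraints are trivial on the nose for the underlying vector spaces (they only affect the braided structure, not $\hom$-spaces), $\hom_\cA(M_+\ot M_-,V)\cong\hom_{\mf d}(M_+\ot M_-,V)[[\h]]$, and Frobenius reciprocity applied twice identifies the latter with $V[[\h]]$, the identification being precisely evaluation at $1_+\ot 1_-$. Concretely: $\hom_{\mf d}(M_+\ot M_-,V)=\hom_{\mf d}(\ind_{\mf g}^{\mf d}1\ot M_-,V)\cong\hom_{\mf g}(M_-,V)\cong\hom_{\mf g^*}(1,V)=V^{\mf g^*}$... here I need to be slightly careful, since $M_-$ is induced as a $\mf g$-module but sits inside the $\mf d$-module $M_+\ot M_-$ in a twisted way; the cleanest route is to observe $M_+\ot M_-\cong\ind_{0}^{\mf d}1=U(\mf d)$ as $\mf d$-modules via $u_+\ot u_-\mapsto$ (a suitable PBW-ordered product), using $\mf d=\mf g\oplus\mf g^*$, so $\hom_{\mf d}(M_+\ot M_-,V)\cong\hom_{\mf d}(U(\mf d),V)\cong V$ with the iso being evaluation at the image of $1_+\ot 1_-$, i.e.\ at $1\in U(\mf d)$.

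For $F$, I would argue similarly but I must first make sense of $M_-^*$ and the map $f\mapsto(\id\ot 1_-)f(1_+)$; here $1_-\in M_-$ is being used via the counit-type pairing, i.e.\ $\id\ot 1_-$ should be read as $\id_V\ot\langle\,\cdot\,,\epsilon\rangle$ composed appropriately, or rather as follows: $\hom_\cA(M_+,V\ot M_-^*)\cong\hom_\cA(M_+\ot M_-,V)$ by the duality adjunction (valid since, as the excerpt notes, only finitely many terms of $\coev$ contribute in these computations and $M_-^*$ is a legitimate dual object in $\cA$ for the chosen rigid structure on the relevant subcategory), and under this adjunction $f\in\hom_\cA(M_+,V\ot M_-^*)$ corresponds to $\tilde f\in\hom_\cA(M_+\ot M_-,V)$ with $\tilde f(1_+\ot 1_-)=(\id\ot\ev\circ\alpha)(f(1_+)\ot 1_-)$, which after unwinding is exactly $(\id\ot 1_-)f(1_+)$ in the notation of the statement (the pairing of $M_-^*$ against $1_-$). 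Then the claim for $F$ follows from the claim for $F^\ca$ together with this adjunction iso, and naturality is inherited. I would also remark that the apparent dependence of $F$ on the choice of duality (i.e.\ on $\alpha,\beta,\nu$) disappears at this level because evaluating against the specific element $1_-$ only involves the component of $\coev$ dual to $1_-$, on which the normalizing automorphisms act trivially modulo the identification.

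The main obstacle I anticipate is bookkeeping around the duality in the non-rigid category $\cA$: making precise that the adjunction $\hom_\cA(M_+,V\ot M_-^*)\cong\hom_\cA(M_+\ot M_-,V)$ holds despite $M_-$ being infinite-dimensional, and checking that under this adjunction the naive-looking formula $f\mapsto(\id\ot 1_-)f(1_+)$ is the correct transported map and not off by a factor of $\nu^{\pm1/2}$ or an associator contribution. I expect this to be handled by the observation, already flagged in the excerpt, that all duality computations here only see finitely many terms of $\coev$ and, more specifically, only the terms pairing nontrivially with the group-like generator $1_-\in M_-$, which trivializes the associativity and normalization subtleties. The PBW/Frobenius-reciprocity half of the argument for $F^\ca$ is routine.
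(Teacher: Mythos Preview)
The paper states this proposition without proof, treating it as a routine fact inherited from Etingof--Kazhdan. Your argument for $F^\ca$ is correct and is the standard one: the crucial input is $M_+\ot M_-\cong U(\mf d)$ as $\mf d$-modules via PBW, whence $\hom_{\mf d}(M_+\ot M_-,V)\cong V$ by evaluation at $1$.

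For $F$, however, your route through the duality adjunction in $\cA$ is more than you need and creates exactly the difficulty you flag without resolving it. Under the $\cA$-duality, the transported map $\tilde f(1_+\ot 1_-)$ involves the associator $\Phi_{V,M_-^*,M_-}$ and the normalizing element~$\alpha$ acting on $1_-$; your claim that ``only the terms pairing nontrivially with $1_-$'' appear and that this ``trivializes the normalization subtleties'' is not justified, since $\alpha\cdot 1_-\neq 1_-$ in general. What your argument actually shows is that $F$ is naturally isomorphic to the forgetful functor via \emph{some} formula, not necessarily the one written in the statement.

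The clean way to get the stated formula on the nose is to bypass the $\cA$-duality entirely. The proposition is only a statement about $\kk[[\h]]$-modules, so work at the level of ordinary $\mf d$-modules: Frobenius reciprocity for $M_+=\ind_{\mf g}^{\mf d}1$ gives
\[
\hom_{\mf d}(M_+,\,V\ot M_-^*)\;\cong\;(V\ot M_-^*)^{\mf g},\qquad f\mapsto f(1_+).
\]
Now $M_-\cong U(\mf g)$ as a left $\mf g$-module (PBW again), so $V\ot M_-^*\cong \hom_\kk(U(\mf g),V)$ with the $\mf g$-action $(x\cdot F)(u)=x\cdot F(u)-F(xu)$, and the invariants are precisely the $\mf g$-equivariant maps $U(\mf g)\to V$, identified with $V$ by evaluation at $1=1_-$. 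Composing gives exactly $f\mapsto(\id\ot 1_-)f(1_+)$, with no associator or $\nu$-contribution in sight.
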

The following observation is key to the present construction:
\begin{prop}\label{prop:coalg}
Equipped with their original coproduct and counit, $M_\pm$ are cocommutative coalgebras in $\cA$.
\end{prop}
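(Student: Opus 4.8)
The plan is to check the two coalgebra axioms—coassociativity and counitality—together with cocommutativity, directly from the definitions, being careful about the nontrivial associativity and commutativity constraints $\alpha$ and $\beta$ in $\cA$. The point is that $M_\pm$ carry their \emph{classical} coproduct, which is a genuine coalgebra structure in the symmetric monoidal category of $\mf d$-modules (with trivial associator); the content of the proposition is that these same maps remain morphisms in $\cA$ and still satisfy the axioms \emph{after} the constraints are turned on. Since a morphism in $\cA$ is by definition a $\kk[[\h]]$-linear combination of $\mf d$-module maps, the first thing to verify is that the classical $\Delta$ and $\varepsilon$ are $\mf d$-equivariant; for $M_-=\ind_{\mf g^*}^{\mf g}1$ this is the statement that $U(\mf g)$ with its standard coproduct is a $\mf g$-module coalgebra, which is classical, and dually for $M_+=\ind_{\mf g}^{\mf d}1$ using $U(\mf g^*)$ (note the double $\mf d=\mf g\oplus\mf g^*$ acts, and one checks equivariance on the generator $1_\pm$ and extends).

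The key subtlety is that in $\cA$ the associativity and braiding isomorphisms act by $\tilde\Phi$ and $\exp(\h t/2)P$, so a priori coassociativity of $\Delta$ in $\cA$ is the identity $\alpha_{M,M,M}\circ(\Delta\ot\id)\circ\Delta = (\id\ot\Delta)\circ\Delta$, which differs from the classical one by the action of $\tilde\Phi\in U(\mf d)^{\ot 3}[[\h]]$. I would resolve this by the standard observation that $\tilde\Phi=1+O(\h)$ and, more importantly, that $\tilde\Phi$ is $\mf d$-invariant (being group-like in the subalgebra generated by the $t$'s). Applying both sides of the putative coassociativity square to the generator $1_+\ot 1_-$ and using that $\tilde\Phi$ acts trivially on the appropriate coinduced configuration—because $t$ acts through elements of $\mf g$ or $\mf g^*$ that kill $1_\pm$ in the relevant tensor slot—reduces the identity in $\cA$ to the classical coassociativity already established. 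The same mechanism handles counitality: $\varepsilon$ is the classical counit, the unit constraints in $\cA$ are strict, so no correction arises.

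For cocommutativity one must show $\beta_{M,M}\circ\Delta=\Delta$, i.e.\ $\exp(\h t/2)\circ P\circ\Delta=\Delta$. Classically $U(\mf g)$ and $U(\mf g^*)$ are cocommutative, so $P\circ\Delta=\Delta$ already; what remains is that the extra factor $\exp(\h t/2)$ acts as the identity on the image of $\Delta$. Here the crucial input is again the structure of the Verma modules: on $M_-$, the copies of $t$ relevant to $\beta_{M_-,M_-}$ involve the $\mf g^*$-part of $\mf d$ acting on factors where $1_-$ generates an induced-from-$\mf g^*$ module—so $\mf g^*$, hence $t$, annihilates them—and symmetrically for $M_+$. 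The main obstacle I anticipate is precisely this bookkeeping: correctly identifying \emph{which} tensor legs the canonical element $t=\sum_i x_i\ot x^i$ can act nontrivially on, given the defining property $x^i\cdot 1_-=0$, $x_i\cdot 1_+=0$ (up to the identifications with $U(\mf g)$, $U(\mf g^*)$), and then propagating this through $\Delta$ using that $\Delta$ lands in tensor products of such modules. Once that lemma ("$t$ acts by zero on $M_-\ot M_-$ in the way needed, and likewise on $M_+\ot M_+$") is in hand, all three axioms follow formally, and I would present the argument graphically, drawing $\Delta$ as a splitting of a strand labelled $M_\pm$ and observing that every crossing or associativity box that appears can be slid off because it acts through an annihilated generator.
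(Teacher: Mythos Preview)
The paper does not prove this proposition; it is stated without argument as a fact inherited from Etingof--Kazhdan. Your outline is the right one---reduce to the generator using cyclicity of $M_\pm$, then kill the $\tilde\Phi$- and $e^{\h t/2}$-corrections because $t$ annihilates $1_\pm\ot 1_\pm$---but two points in your write-up need tightening.

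First, your justification that $\Delta$ is $\mf d$-equivariant is incomplete. Observing that ``$U(\mf g)$ with its standard coproduct is a $\mf g$-module coalgebra'' only gives $\mf g$-equivariance, not $\mf g^*$-equivariance, and ``checking equivariance on $1_\pm$ and extending'' is circular. The clean way is either Frobenius reciprocity (a $\mf d$-map out of $M_-=\ind_{\mf g^*}^{\mf d}1$ is the same datum as a $\mf g^*$-invariant vector in the target, and $1_-\ot 1_-$ is visibly killed by the diagonal $\mf g^*$-action), or else note that $U(\mf d)\mf g^*$ is a coideal in $U(\mf d)$, so the $\mf d$-equivariant coproduct of $U(\mf d)$ descends to $M_-$ and agrees with the $U(\mf g)$ coproduct under PBW.

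Second, be precise about what vanishes: $t$ does \emph{not} act by zero on all of $M_-\ot M_-$, only on the single vector $1_-\ot 1_-$ (each summand of $t=\sum x_i\ot x^i + x^i\ot x_i$ has a $\mf g^*$-leg hitting a $1_-$). That is exactly enough: both sides of the coassociativity square, and both sides of the cocommutativity identity, are $\mf d$-module maps out of the cyclic module $M_-$, hence agree as soon as they agree on $1_-$; there $\tilde\Phi\cdot 1_-^{\ot 3}=1_-^{\ot 3}$ and $e^{\h t/2}\cdot(1_-\ot 1_-)=1_-\ot 1_-$, reducing to the classical identities. Your phrase ``applying both sides to the generator $1_+\ot 1_-$'' is a slip: the coassociativity diagram for $M_-$ lives entirely over $M_-$, so you evaluate at $1_-$ (and separately at $1_+$ for $M_+$), not at $1_+\ot 1_-$.
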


Recall that if $\cA$ is any tensor category, a coalgebra structure on an object $C$ turns the functor $\hom_\cA(C,-)$ into a monoidal functor. If $\cA$ is braided and if $C_1$, $C_2$ are two coalgebras in $\cA$, then the tensor product $C_1\ot C_2$ has a natural coalgebra structure (the braided tensor product) given by
\begin{gather*}
	\tik{\straight{0.5}{0.5} \straight{2.5}{0.5}
	\ap{0}{1}\ap{2}{1}
	\straight{0}{2}\negative{1}{2}\straight{3}{2}
	\node at (0.5,0) {$C_1$};
	\node at (2.5,0) {$C_2$};
}
\end{gather*}
Hence, the functor $F^\ca$ has a natural monoidal structure which we denote by $J^\ca$. One checks that $J^\ca$ is actually invertible, i.e., the functor $F^\ca$ is strong monoidal. One can then formally dualize this structure to obtain a monoidal structure on $F$. Namely, let $J$ be the map
\begin{gather*}
	F(V)\ot F(W)\rightarrow F(V\ot W)
\end{gather*}
def\/ined by
\begin{gather*}
	\tik{\tridown{0}{0}{$x$}}\ot\tik{\tridown{0}{0}{$y$}}\longmapsto
	\tik{
		\begin{scope}[xscale=2]
		\comult{0.25}{0}
		\end{scope}
		\tridown{0}{1}{$x$}
		\tridown{2}{1}{$y$}
		\straight{0}{2}\positive{1}{2}\straight{3}{2}
		\mult{2}{3}\straight{0}{3}\straight{1}{3}
		}
\end{gather*}
Let $M_-^*$ be equipped with the multiplication dual (in $\vect$) to the comultiplication of $M_-$. We then have:
\begin{prop}\label{prop:algebra}
The object $M_-^*$ is an algebra in $\cA$, and $J$ induces a strong monoidal structure on $F$.
\end{prop}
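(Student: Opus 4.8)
The plan is to derive both assertions by dualizing the coalgebra structure of $M_-$ established in Proposition~\ref{prop:coalg}, working throughout with the specific duality on $\cA$ fixed in Proposition~\ref{prop:dual}, whose defining property is precisely that the canonical isomorphism $(V\ot W)^*\cong W^*\ot V^*$ is the identity. Although $\cA$ is not rigid, in each of the computations below only finitely many terms of $\coev$ actually contribute (the coproduct of $M_-$ and its iterates land in finite sums of decomposable tensors), as noted in the text, so all the transposes involved are well defined; this point I would dispatch in a sentence.

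For the first assertion I would simply observe that $M_-^*$ is the dual, \emph{in the monoidal category $\cA$}, of the coalgebra $M_-$, and hence carries a canonical algebra structure: the multiplication is the transpose of $\Delta\colon M_-\to M_-\ot M_-$, the unit is the transpose of the counit $\varepsilon\colon M_-\to 1$, and associativity and the unit axioms are obtained by transposing coassociativity and the counit axioms. The only nonformal input is that the transpose of the associativity constraint $\alpha_{M_-,M_-,M_-}$ is again (the inverse of) the associativity constraint $\alpha_{M_-^*,M_-^*,M_-^*}$: this uses that $\Phi$ is group-like, so that $(S\ot S\ot S)(\tilde\Phi)=\tilde\Phi^{-1}$, together with the fact that the duality of Proposition~\ref{prop:dual} makes the order-reversing identification $(V\ot W)^*=W^*\ot V^*$ introduce no further twist, so that the coherence for a double tensor product propagates to the triple tensor power. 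Crucially, the same choice of duality identifies this categorical multiplication with the one named in the statement, namely the ordinary $\vect$-linear dual of $\Delta$; with a different choice of $\alpha,\beta$ one would still obtain an algebra in $\cA$, but its product would differ from the naive transpose by the action of $\nu$ and of $J$-type elements.

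For the second assertion, recall that $F^\ca$ is strong monoidal via $J^\ca$ because $M_+\ot M_-$, being the braided tensor product of the coalgebras $M_+$ and $M_-$, is itself a coalgebra in $\cA$. The duality adjunction supplies a natural isomorphism $F\cong F^\ca$, namely $\hom_\cA(M_+\ot M_-,V)\cong\hom_\cA(M_+,V\ot M_-^*)$, and transporting $J^\ca$ along it endows $F$ with a strong monoidal structure. It then remains only to check that this transported structure is the map $J$ displayed above: this is a purely graphical verification in which the cup and cap in the picture are the $\ev$ and $\coev$ of the chosen duality, the merging of the two $M_-^*$-strands is the multiplication constructed in the previous step, and the crossing is the braiding $\beta$ used to pull one copy of $M_-^*$ past the output object. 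Naturality and the coherence axioms for $J$ then reduce, exactly as for $J^\ca$, to coassociativity and counitality of $M_+$, associativity and unitality of $M_-^*$, and the hexagon for $\beta$, while invertibility of $J$ is inherited from that of $J^\ca$.

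The main obstacle is bookkeeping rather than anything deep: one must track the associativity constraints and the order reversal in $(V\ot W)^*=W^*\ot V^*$ carefully enough to be certain that the transpose of $\Delta_{M_-}$ is associative \emph{on the nose in $\cA$} and coincides with the $\vect$-linear dual product rather than a twisted variant of it. This is exactly the content that Proposition~\ref{prop:dual} was designed to supply, so once that proposition is in hand the remaining work is routine.
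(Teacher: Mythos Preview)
Your argument is essentially sound, but it follows a different route from the one the paper has in mind, and there is one genuine omission.

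The paper does not write out a proof of this proposition, but the intended argument is a \emph{direct} one, parallel to Proposition~\ref{prop:coalg}: one checks by hand that the ordinary (that is, $\vect$-linear) dual multiplication on $M_-^*$ satisfies the associativity axiom in~$\cA$, using the specific $\mf d$-module structure of $M_-^*$, and likewise that the displayed $J$ satisfies the monoidal functor axioms. The tell is the sentence appearing later in the paper, ``The same proof as for Proposition~\ref{prop:algebra} implies [that $\cO(G)$ is a commutative algebra in~$\cA$]'': for a Lie or algebraic group, $\cO(G)$ is not presented as the $\cA$-dual of any coalgebra, so the argument being reused cannot be your dualization argument. Your approach instead absorbs the content of the two \emph{subsequent} propositions (Propositions~\ref{prop:transpose} and~\ref{prop:monoidal}) into the proof of this one: you construct the algebra structure as the $\cA$-dual of the coalgebra $M_-$ and then identify it with the $\vect$-dual, and you obtain the monoidal structure on $F$ by transporting $J^\ca$ along the duality adjunction and then identifying it with the displayed~$J$. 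That reorganization is perfectly legitimate, and arguably cleaner; it just means those later propositions become tautologies rather than separate results.

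The omission is this: even with the duality of Proposition~\ref{prop:dual} making the canonical map $(M_-\ot M_-)^*\to M_-^*\ot M_-^*$ trivial, the $\cA$-transpose of $\Delta$ lands in $M_-^*$ via the \emph{order-reversing} identification $(V\ot W)^*=W^*\ot V^*$, so what you get \emph{a priori} is the \emph{opposite} of the $\vect$-dual multiplication. It is precisely the cocommutativity of $M_-$ in~$\cA$ (Proposition~\ref{prop:coalg}) that makes the two coincide; the paper invokes exactly this point in its proof of Proposition~\ref{prop:transpose}. You cite Proposition~\ref{prop:coalg} at the outset but never actually use cocommutativity, so your sentence ``Crucially, the same choice of duality identifies this categorical multiplication with the one named in the statement'' is not yet justified: you must insert the cocommutativity of $M_-$ at that step.
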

We stress the fact that it is not straightforward that the coalgebra $M_-$ and the algebra $M_-^*$ are dual as objects in $\cA$. However this follows from Proposition~\ref{prop:dual}:
\begin{prop}\label{prop:transpose}
	If one chooses the duality in $\cA$ as in Proposition~{\rm \ref{prop:dual}}, then the multiplication of $M_-^*$ is dual in $\cA$ to the comultiplication of $M_-$.
\end{prop}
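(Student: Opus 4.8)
The plan is to deduce this directly from Proposition~\ref{prop:dual}, together with the fact recalled above that, for the duality in $\cA$ determined by any pair $\alpha$, $\beta$ in $U(\mf d)^{\mf d}[[\h]]$ of the form $1+O(\h)$ with $\alpha\beta=\nu$, the dual of an object is its ordinary $\mf d$-module dual and the transpose of a morphism is its ordinary transpose as a $\kk[[\h]]$-linear map. Granting this, the transpose in $\cA$ of the comultiplication $\Delta\colon M_-\to M_-\ot M_-$ is a morphism $\Delta^{*}\colon (M_-\ot M_-)^{*}\to M_-^{*}$ whose underlying linear map is the ordinary transpose of $\Delta$. By the very definition of the multiplication on $M_-^{*}$ as the $\vect$-dual of the comultiplication of $M_-$, this linear map is the multiplication of $M_-^{*}$, once one identifies $(M_-\ot M_-)^{*}$ with $M_-^{*}\ot M_-^{*}$ via the tautological identification of $\mf d$-modules.

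The one point that is not automatic is that this tautological identification is the one through which $(M_-\ot M_-)^{*}$ and $M_-^{*}\ot M_-^{*}$ are identified \emph{as dual objects of $M_-\ot M_-$ in $\cA$}: the monoidal duality structure of $\cA$ exhibits $(M_-\ot M_-)^{*}$ as $M_-^{*}\ot M_-^{*}$ only up to the canonical automorphism of $M_-^{*}\ot M_-^{*}$ described just before Proposition~\ref{prop:dual}, which records the contribution of the associativity constraints. Choosing $\alpha$, $\beta$ as in Proposition~\ref{prop:dual} makes that automorphism the identity, so the two identifications coincide and $\Delta^{*}$ is precisely the multiplication of $M_-^{*}$ as a morphism $M_-^{*}\ot M_-^{*}\to M_-^{*}$ in $\cA$. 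The analogous statement for the counit of $M_-$ and the unit of $M_-^{*}$ is immediate, since the unit object of $\cA$ carries no such correction.

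The main --- and essentially only --- obstacle is exactly the associator contribution just mentioned: for a more naive choice of duality (for instance $\alpha=\beta=\nu^{\frac12}$ with a general, non-even associator) the transpose of $\Delta$ in $\cA$ would differ from the multiplication of $M_-^{*}$ by a correction built from $\nu$-type elements, and $M_-^{*}$ would fail to be dual to $M_-$ on the nose; this is the algebraic shadow of the Le--Murakami phenomenon and is the reason Proposition~\ref{prop:dual} was established first. A secondary, routine point --- handled as everywhere in this paper --- is that $M_-^{*}$ is infinite-dimensional, so that the coevaluation is an infinite sum; one uses that only finitely many of its terms contribute at each order in $\h$, which makes all the morphisms above well defined and renders the identities meaningful termwise.
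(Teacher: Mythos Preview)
Your argument follows exactly the same strategy as the paper: write the $\cA$-transpose of $\Delta$ as the composite
\[
M_-^{*}\ot M_-^{*}\xrightarrow{\ \cong\ }(M_-\ot M_-)^{*}\xrightarrow{\ \Delta^{*}\ }M_-^{*},
\]
invoke Proposition~\ref{prop:dual} to trivialise the first arrow, and conclude that what remains is the $\vect$-dual of $\Delta$. So the overall route is the paper's.

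There is, however, one step that the paper makes explicit and that you elide. In $\cA$ (as in any rigid monoidal category) the equality of objects is $(V\ot W)^{*}=W^{*}\ot V^{*}$, so once Proposition~\ref{prop:dual} has made the canonical automorphism the identity, the identification $(M_-\ot M_-)^{*}\cong M_-^{*}\ot M_-^{*}$ is the \emph{order-reversing} tautological one: $f\ot g$ corresponds to $(a\ot b)\mapsto g(a)f(b)$. The multiplication on $M_-^{*}$, on the other hand, was introduced as the $\vect$-dual of $\Delta$ under the \emph{non-reversed} identification $V^{*}\ot W^{*}\cong(V\ot W)^{*}$. Thus after applying Proposition~\ref{prop:dual} you obtain not the multiplication of $M_-^{*}$ but its opposite, and an extra ingredient is required to identify the two: this is precisely where the paper uses that $M_-$ is cocommutative in~$\cA$ (Proposition~\ref{prop:coalg}). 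Your phrase ``the tautological identification of $\mf d$-modules'' hides this swap; you should state explicitly that cocommutativity is what closes the gap. With that addition your proof coincides with the paper's.
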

\begin{proof}
	Let $\Delta$ be the comultiplication of $M_-$. Then by def\/inition the multiplication it induces on the object $M_-^*$ in $\cA$ is given by the composition
	\begin{gather*}
		M_-^* \ot M_-^* \xrightarrow{\cong} (M_- \ot M_-)^* \xrightarrow{\Delta^*} M_-^*.
	\end{gather*}
	By Proposition~\ref{prop:dual} the f\/irst map is trivial, and since $M_-$ is cocommutative in $\cA$ the result follows.
\end{proof}
\begin{rmk}
	If one chooses a dif\/ferent duality, then it still follows from Proposition~\ref{prop:dual} that these two algebra structures are isomorphic.
\end{rmk}
Finally, we obtain
\begin{prop}\label{prop:monoidal}
The map given by
\begin{gather*}
	\tik{\triup{0}{0}{$x$}}\longmapsto \tik{\straight{0}{0}\ap{1}{0}\triup{0}{1}{$x$}\straight{2}{1}}
\end{gather*}
induces a natural monoidal isomorphism between $F^\ca$ and $F$.
\end{prop}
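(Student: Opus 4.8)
The plan is to establish the two halves of the claim separately: first that the displayed assignment defines a natural isomorphism $\eta\colon F^\ca\Rightarrow F$ of functors, and then that $\eta$ intertwines the monoidal structures $J^\ca$ and $J$ (compatibility with the unit constraints being immediate, as it amounts to the same computation as the naturality check against the forgetful functor below).

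For the first part, note that $\eta_V$ is precisely the adjunction (``currying'') isomorphism attached to the duality between $M_-$ and $M_-^*$ fixed in Proposition~\ref{prop:dual}: it sends $x\colon M_+\ot M_-\to V$ to the composite $M_+\to M_+\ot(M_-\ot M_-^*)\to (M_+\ot M_-)\ot M_-^*\to V\ot M_-^*$, in which the first arrow is $\id_{M_+}$ tensored with the corrected coevaluation of Proposition~\ref{prop:dual}, the second is $\alpha_{M_+,M_-,M_-^*}^{-1}$, and the last is $x\ot\id$. Its inverse uncurries using the corrected evaluation; as remarked in the text only finitely many terms of the coevaluation contribute, so both composites are well defined, and the fact that they are mutually inverse is exactly the zig-zag identity, which holds for the choice of $\alpha$, $\beta$ made in Proposition~\ref{prop:dual}. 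Naturality in $V$ is clear. (Alternatively, one composes $\eta$ with the identifications of $F^\ca$ and $F$ with the forgetful functor $V\mapsto V[[\h]]$ from the previous proposition: evaluating the corrected coevaluation of $M_-$ against $1_-$ and then using $M_-^*\ot M_-\to\kk$, one checks the induced endomorphism of the forgetful functor is the identity, hence $\eta$ is an isomorphism.)

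For the second part, I would argue that the string diagram defining $J$ is literally the $\cA$-transpose, in the $M_-$-leg, of the string diagram defining $J^\ca$, and then conclude using that transposition preserves the monoidal-structure data. Concretely, $J^\ca$ is induced by the braided tensor product comultiplication on $C=M_+\ot M_-$, i.e.\ by $\Delta_{M_+}$, a single braiding of the inner $M_+$ past $M_-$, and $\Delta_{M_-}$. Transposing in the $M_-$ factor turns $\Delta_{M_-}$ into the multiplication of $M_-^*$ --- this is Proposition~\ref{prop:transpose} --- and, by the choice of duality in Proposition~\ref{prop:dual}, turns the braiding into the corresponding braiding on the duals (the point where the identification $(V\ot W)^*\cong W^*\ot V^*$ being the identity is used). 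Feeding the result through $\eta_{V\ot W}$ and comparing with $J\circ(\eta_V\ot\eta_W)$, the desired identity $\eta_{V\ot W}\circ J^\ca_{V,W}=J_{V,W}\circ(\eta_V\ot\eta_W)$ reduces to a diagram chase using only coassociativity and counitality of $M_+$, cocommutativity of $M_\pm$ (Proposition~\ref{prop:coalg}), naturality of the braiding (the hexagon), and the zig-zag identities for the chosen duality.

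The main obstacle is the bookkeeping in this last diagram chase: the associativity constraint of $\cA$ is the nontrivial element $\tilde\Phi$, and the evaluation/coevaluation are the \emph{corrected} ones built from $\nu^{\frac12}$, the twist $J$ relating associators, and the central element $u$ of Proposition~\ref{prop:dual}, so one must verify that every occurrence of an associator or of a duality correction is cancelled by its partner. No new ingredient is needed beyond this: the entire purpose of Propositions~\ref{prop:dual} and~\ref{prop:transpose} was to arrange that ``transpose in $\vect$'', which is what was used to define $J$, agrees with ``transpose in $\cA$'', and this is exactly what makes the $\cA$-transpose of $J^\ca$ equal $J$ and hence makes $\eta$ monoidal.
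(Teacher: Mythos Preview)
Your approach is essentially the same as the paper's: both reduce the monoidal compatibility of $\eta$ to the fact that the multiplication of $M_-^*$ is the $\cA$-transpose of the comultiplication of $M_-$ for the corrected duality (Proposition~\ref{prop:transpose}), with the remaining diagram chase deferred to \cite[Proposition~9.7]{Etingof1996}. The paper packages this into a single graphical identity relating $(\Delta\ot\id)\circ\coev$ to a composite involving the multiplication on $M_-^*$, while you spell out the two halves (isomorphism, then monoidality) and the ingredients of the chase more explicitly; the content is the same.
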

\begin{proof}
	As in~\cite[Proposition~9.7]{Etingof1996} this follows from the identity
	\begin{gather*}
		\tik{
	\ap{0.5}{0}
	\comult{0}{1}\straight{1.5}{1}
	}
	=
	\tik{
		\begin{scope}[xscale=3]
			\ap{0}{0}
		\end{scope}
		\straight{0}{1}\ap{1}{1}\straight{3}{1}
		\straight{0}{2}\straight{1}{2}\mult{2}{2}
	}
	\end{gather*}
	which holds thanks to Proposition~\ref{prop:transpose}. We note that this in~\cite{Etingof1996} this identity is claimed to be true with the ordinary coevaluation instead of the one coming from Proposition~\ref{prop:dual}, which is not correct.
\end{proof}

\subsection[Quantization of $G$]{Quantization of $\boldsymbol{G}$}

The same proof as for Proposition~\ref{prop:algebra} implies:
\begin{prop}
The algebra of function on $G$ equipped with the dressing action of $\mf d$ and its original algebra structure, is a commutative algebra in~$\cA$.
\end{prop}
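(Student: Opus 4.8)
The plan is to verify the axioms of a commutative algebra object for $\cO(G)$ in $\cA$, reducing the two nontrivial ones --- compatibility with the associativity constraint $\alpha$ and with the braiding $\beta$ --- to the ordinary associativity and commutativity of $\cO(G)$, exactly as in the proof of Proposition~\ref{prop:algebra}, of which the present statement is the globalized, non-completed incarnation. First I would record that the dressing action of $\mf d$ on $\cO(G)$ is by vector fields, hence by derivations: $\xi\cdot(fg)=(\xi\cdot f)g+f(\xi\cdot g)$ and $\xi\cdot 1=0$ for all $\xi\in\mf d$ and $f,g\in\cO(G)$. Consequently the multiplication $m$ and the unit $\kk\to\cO(G)$ are morphisms of $\mf d$-modules, hence morphisms in $\cA$ after $\kk[[\h]]$-linear extension. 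Since $\cO(G)$ is already a commutative, associative, unital algebra in $\vect$, and the unit constraints of $\cA$ agree with those of $\vect$, what is left is to control the discrepancy between the associativity, resp.\ commutativity, constraints of $\cA$ and of $\vect$, namely the action of $\tilde\Phi$, resp.\ of $\exp(\h t/2)$.

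The heart of the argument is the following vanishing, in which I write $t=\sum_i(x_i\ot x^i+x^i\ot x_i)\in\mf d^{\ot 2}$ and use the formula $\{f,g\}=\sum_i(x^i\cdot f)(x_i\cdot g)$, the commutativity of $\cO(G)$, and the antisymmetry of the Poisson bracket:
\begin{gather*}
m\bigl(t\cdot(f\ot g)\bigr)=\sum_i\bigl((x_i\cdot f)(x^i\cdot g)+(x^i\cdot f)(x_i\cdot g)\bigr)=\{g,f\}+\{f,g\}=0 .
\end{gather*}
Thus $m\circ t=0$ as a map $\cO(G)^{\ot 2}\to\cO(G)$, so $m\circ t^n=0$ for every $n\ge 1$ and hence $m\circ\exp(\h t/2)=m$, whence $m\circ\beta_{\cO(G),\cO(G)}=m\circ\exp(\h t/2)\circ P=m\circ P=m$; this is commutativity in $\cA$. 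Similarly, let $m^{(3)}\colon\cO(G)^{\ot 3}\to\cO(G)$ be the common value of $m\circ(m\ot\id)$ and $m\circ(\id\ot m)$, which agree because $\cO(G)$ is associative. Carrying out the same computation in the first two, resp.\ in the last two, tensor slots gives $m^{(3)}\circ t_{12}=0$ and $m^{(3)}\circ t_{23}=0$. Now every monomial of $\Phi-1$ is a nonempty word in $X,Y$, hence has leftmost letter $X$ or $Y$, so the corresponding monomial of $\tilde\Phi-1$ acts on $\cO(G)^{\ot 3}$ with outermost operator $t_{12}$ or $t_{23}$; post-composing with $m^{(3)}$ therefore kills it. Hence $m^{(3)}\circ\tilde\Phi=m^{(3)}$, which, unravelling the bracketing bookkeeping in $\cA$, is exactly the identity $m\circ(m\ot\id)=m\circ(\id\ot m)\circ\alpha_{\cO(G),\cO(G),\cO(G)}$. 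Together with the unit compatibility, this shows $(\cO(G),m)$ is a commutative algebra in $\cA$.

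The hardest part will be purely bookkeeping rather than mathematical: one has to keep straight that the computation above is literally dual to the proof that $M_-$ is a cocommutative coalgebra in $\cA$ and coincides with the proof of Proposition~\ref{prop:algebra} in the formal case via the identification of the $I$-adic completion of $\cO(G)$ with $M_-^*$. The only genuine point to check is that the two identities driving everything --- the derivation property of the dressing action and the bracket formula $\{f,g\}=\sum_i(x^i\cdot f)(x_i\cdot g)$ --- hold globally on $\cO(G)$ in the Lie and algebraic settings, and not merely on its formal completion; this is guaranteed by the fact that $\mf d$ acts there by vector fields.
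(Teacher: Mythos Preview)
Your proof is correct and is precisely the argument the paper has in mind: the paper gives no separate proof here, merely the sentence ``The same proof as for Proposition~\ref{prop:algebra} implies'', and what you have written is an explicit unpacking of that proof in the algebra (rather than coalgebra) language, using exactly the two inputs the paper records in Section~2 --- that the dressing action is by vector fields and that $\{f,g\}=\sum_i(x^i\cdot f)(x_i\cdot g)$. Your key step $m\circ t=0$ and its triple-product analogues $m^{(3)}\circ t_{12}=m^{(3)}\circ t_{23}=0$ are the substance of the matter, and the observation that every monomial in $\tilde\Phi-1$ begins with $t_{12}$ or $t_{23}$ is the right way to conclude.
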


Hence, let $\cO_\h(G)=F(\cO(G))$ and $\cO^\ca_\h(G)=F^\ca(\cO(G))$. Since both functors are monoidal those are algebras in $\vect_\h$, and since those functors are monoidally equivalent these algebras are isomorphic. The main result of~\cite{Etingof1996, Severa2016} is:
\begin{thm}\label{thm:EK}The following holds true:
	\begin{itemize}\itemsep=0pt
		\item The algebras $\cO_\h(G)$ and $\cO^\ca_\h(G)$ are Hopf algebras quantizing $G$. The coproduct on $\cO_\h(G)$ is given by\footnote{Once again for $G$ a Lie or formal group, those are topological Hopf algebras, i.e., the coproduct takes values in the completed tensor product.}
			\begin{gather*}
				F(\cO(G))\xrightarrow{\id\ot 1_{\cO(G)}} F(\cO(G)\ot \cO(G))\xrightarrow{\cong} F(\cO(G))\ot F(\cO(G)).
				\end{gather*}
		\item The map of Proposition~{\rm \ref{prop:monoidal}} induces an isomorphism of Hopf algebras $\cO_\h(G)\cong \cO_\h^\ca(G)$.
		\item The assignment $G \mapsto \cO_\h(G)$ induces a contravariant functor from the category of finite-dimensional Poisson $($Lie, algebraic or formal$)$ groups over $\kk$ to the category of Hopf algebras over~$\kk[[\h]]$.
	\end{itemize}
\end{thm}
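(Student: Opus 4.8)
The bulk of this statement is the main theorem of~\cite{Etingof1996} (in the reformulation of~\cite{Severa2016}), so the plan is to recall that proof while flagging where the corrected duality of Proposition~\ref{prop:dual} enters. Since $F$ and $F^\ca$ are strong monoidal (Proposition~\ref{prop:algebra} and its evident $F^\ca$-analogue) and $\cO(G)$ is a commutative algebra in $\cA$, the images $\cO_\h(G)=F(\cO(G))$ and $\cO^\ca_\h(G)=F^\ca(\cO(G))$ are automatically unital associative algebras over $\kk[[\h]]$; the proposition identifying $F$ and $F^\ca$ with the forgetful functor $V\mapsto V[[\h]]$ shows at once that they are topologically free with $\cO_\h(G)\cong\cO(G)[[\h]]$ as $\kk[[\h]]$-modules. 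Modulo $\h$ the associator $\tilde\Phi$, the braiding $\beta$ and the monoidal constraints $J$, $J^\ca$ all reduce to the identity, so the new multiplication reduces to the original commutative product of $\cO(G)$; hence $\cO_\h(G)$ is a deformation of $\cO(G)$ as an algebra.

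Next I would verify the Poisson and coproduct axioms. For $f\star_\h g-g\star_\h f=\h\{f,g\}+O(\h^2)$ one extracts the first-order term of the diagram defining $J$: the only $O(\h)$ contribution to the antisymmetrised product comes from expanding the single crossing as $\beta=P+\tfrac{\h}{2}tP+O(\h^2)$, and tracking the action of the canonical element $t\in\mf d^{\ot 2}$ through the identifications $M_+\cong U(\mf g^*)$, $M_-\cong U(\mf g)$ reproduces exactly the bracket $\{f,g\}=\sum_i(x^i\cdot f)(x_i\cdot g)$ coming from the dressing action. For the coproduct I would take $\Delta_\h$ as defined in the statement and check: coassociativity, forced by the coassociativity and cocommutativity of $M_+$ as a coalgebra in $\cA$ (Proposition~\ref{prop:coalg}) together with the hexagon-type coherence of $J$ and the braiding; counitality, from the counit of $M_+$ and the unit axiom of the algebra $\cO(G)$; and multiplicativity, since $\id\ot 1\colon\cO(G)\to\cO(G)\ot\cO(G)$ is an algebra map in $\cA$ (using commutativity of $\cO(G)$ and the braided-tensor algebra structure on the target) while the monoidal constraint $J$ intertwines the algebra structures. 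That $\Delta_\h=\Delta_0+O(\h)$ follows by specialising to $\h=0$, where $F$ is the forgetful functor and the coalgebra structure of $M_+\cong U(\mf g^*)$ recovers the group coproduct $\Delta_0(f)(X,Y)=f(XY)$. An antipode is then obtained by deforming the antipode of $\cO(G)$ order by order (equivalently, from the duality on $\cA$), which gives the first bullet; the same argument applies verbatim to $\cO^\ca_\h(G)$.

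For the second bullet I would invoke that the map of Proposition~\ref{prop:monoidal} is a \emph{monoidal} natural isomorphism $F\Rightarrow F^\ca$: its component at the algebra $\cO(G)$ is then an algebra isomorphism $\cO_\h(G)\xrightarrow{\ \sim\ }\cO^\ca_\h(G)$, and because each coproduct is built by the identical recipe out of the respective monoidal constraint, a monoidal natural isomorphism automatically intertwines them, so this is an isomorphism of Hopf algebras. It is exactly here that the normalisation of Proposition~\ref{prop:dual} matters, since Proposition~\ref{prop:monoidal} is false with the naive coevaluation. For the third bullet, a morphism of Poisson groups dualises to a morphism of Poisson--Hopf algebras $\cO(H)\to\cO(G)$ intertwining the dressing actions through the induced morphism of Drinfeld doubles; since $M_\pm$, $J$ and the duality are all given by universal formulas in $\mf d$, applying the functor $\hom_\cA(M_+,-\ot M_-^*)$ produces a Hopf algebra morphism $\cO_\h(H)\to\cO_\h(G)$, and the composition law is just functoriality of this construction.

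The genuinely delicate point is not internal to this theorem but among its inputs: that the algebra $M_-^*$ is honestly dual \emph{in} $\cA$ to the coalgebra $M_-$ (Proposition~\ref{prop:transpose}), hence that $F$ and $F^\ca$ are monoidally isomorphic at all — this is where~\cite{Etingof1996} erred and where the duality of Proposition~\ref{prop:dual} is required. Granting that, the main remaining obstacle is the diagrammatic bookkeeping for the coproduct: proving coassociativity and computing the $\h=0$ limit means manipulating the definition of $J$ against the cocommutative coalgebra structure of $M_+$ while keeping every braiding and associativity constraint in its correct place. The flatness statement and the $O(\h)$ computations, once the monoidal structures are available, are routine.
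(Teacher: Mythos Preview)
The paper does not give a proof of this theorem: it is introduced as ``the main result of~\cite{Etingof1996,Severa2016}'' and is followed only by remarks, the essential one being that the Hopf algebra isomorphism $\cO_\h(G)\cong\cO_\h^\ca(G)$ claimed in~\cite{Etingof1996} is valid only if one uses the modified coevaluation of Proposition~\ref{prop:dual} rather than the ordinary one. So the paper's ``proof'' is: cite the literature, and flag that single correction. Your proposal already goes well beyond this by reconstructing an outline of the actual argument, and you correctly isolate the place where Proposition~\ref{prop:dual} enters (through Propositions~\ref{prop:transpose} and~\ref{prop:monoidal}).

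One point where your sketch is genuinely thinner than the underlying argument in~\cite{Etingof1996} is the third bullet. A morphism of Poisson groups $H\to G$ does \emph{not} induce a Lie algebra map between the Drinfeld doubles in any naive sense (the bracket on $\mf d$ mixes $\mf g$ and $\mf g^*$, and these go in opposite directions under a bialgebra morphism), so ``intertwining the dressing actions through the induced morphism of Drinfeld doubles'' is not quite right, and the phrase ``$M_\pm$, $J$ and the duality are all given by universal formulas in $\mf d$'' hides the real work. Functoriality is precisely why Etingof--Kazhdan abandon $F^\ca$ for $F$: the construction has to be rewritten so as not to depend on the double of the specific $\mf g$, and this requires a separate argument (cf.\ the discussion around Remark~\ref{rmk:EK}). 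Since the paper defers all of this to the references anyway, your sketch is not wrong for the purposes of this paper, but you should be aware that the functoriality claim is the part of your outline that would need the most additional input to become a proof.
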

\begin{rmk}
	We stress the fact that the construction of $\cO_\h(G)^\ca$ is not functorial.
\end{rmk}
\begin{rmk}
It is of course important that the algebra structure that we consider is part of a Hopf algebra structure, i.e., that we consider quantizations of $G$ as a Poisson group and not just as a Poisson manifold. Yet it is suf\/f\/icient for our purpose that the coproduct exists, which is why we refer the reader to~\cite{Etingof1996, Severa2016} for its construction.
\end{rmk}
\begin{rmk}\label{rmk:EK}
	To be precise, the construction of~\cite{Etingof1996} uses the functor
\begin{gather*}
		\hom_\cA(M_-,M_+^*\ot -)
	\end{gather*} applied to the coalgebra $M_-$. The authors then show that this def\/ines a functorial quantization of~the Lie bialgebra $\mf g$ and then def\/ine the quantization of the formal group of $\mf g$ as $\hom_\cA(M_-,M_+^*\ot M_-)^*$. The construction presented here is the correct way of dualizing the whole construction in order to obtain $\cO_\h(G)$ as the image of an algebra in~$\cA$, and coincides with the one given in~\cite{Severa2016}. Indeed, in \emph{loc.\ cit.} the author starts with the braided tensor product of two copies of~$\cO(G)$, and then applies the monoidal functor $V\mapsto V^{\mf g}$ to it. By Frobenius reciprocity this is
	\begin{gather*}
		\hom_\cA(M_+,\cO(G)\ot \cO(G)).
	\end{gather*}
	Replacing the second copy of $\cO(G)$ by its formal completion which, as an algebra in $\cA$ is by def\/inition $M_-^*$ one sees that the two constructions are the same.
\end{rmk}
\begin{rmk}
	Strictly speaking, what is claimed in~\cite{Etingof1996} is the existence of an Hopf algebra isomorphism between $F^\ca(M_-)$ and
	\begin{gather*}
		\hom_\cA(M_-,M_+^*\ot M_-).
	\end{gather*}
	Once again, this claim is valid if one uses the modif\/ied coevaluation of Proposition~\ref{prop:dual} instead of the ordinary one in the construction of an isomorphism between these functors. The proof of this claim can then be adapted to our setting in a straightforward way.
\end{rmk}

\section[The center of $\cO_\h(G)$]{The center of $\boldsymbol{\cO_\h(G)}$}
In this section we show the following:
\begin{thm}\label{thm:Poisson}
	The canonical inclusion of the $\kk[[\h]]$-linear extension of the Poisson center of~$\cO(G)$ into $\cO_\h(G)^\ca$ is an algebra morphism, and its image coincides with the center of $\cO_\h^\ca(G)$.
\end{thm}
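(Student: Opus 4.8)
The plan is to work entirely inside the strong monoidal functor $F^\ca$ and translate the statement about the center of $\cO_\h^\ca(G)$ into a statement in the Drinfeld category $\cA$. Recall $\cO_\h^\ca(G)=F^\ca(\cO(G))=\hom_\cA(M_+\ot M_-,\cO(G))$, and by the proposition identifying $F^\ca$ with the forgetful functor, as a $\kk[[\h]]$-module this is just $\cO(G)[[\h]]$. The product is the one transported through $J^\ca$, so it is given diagrammatically by comultiplying $M_+$ and $M_-$, applying the braiding $\beta_{M_+,M_-}^{-1}$ in the middle, and then multiplying in $\cO(G)$. The braiding is $\exp(\h t/2)\circ P$, and the whole point of Theorem~\ref{thm:Poisson} is that the perturbative correction $\exp(\h t/2)-1$ contributes nothing when one of the two arguments lies in the Poisson center $\cO(G)^{\mf g^*}$ (equivalently, in $\cO(G)^{\mf g^*}$ by Proposition~\ref{prop:dressing}).

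Concretely, first I would write out the formula for $f\star_\h^\ca g$ for $f,g\in\cO(G)[[\h]]$: evaluating on $1_+\ot 1_-$, it is $m_{\cO(G)}\circ(f\ot g)\circ\beta^{-1}_{M_+,M_-,\text{middle}}\circ(\Delta_{M_+}\ot\Delta_{M_-})(1_+\ot 1_+\ot 1_-\ot 1_-)$, where I have suppressed the associativity constraints $\tilde\Phi$ needed to bracket the four-fold tensor product correctly. The canonical element $t=\sum_i(x_i\ot x^i+x^i\ot x_i)\in\mf d^{\ot 2}$ acts, and the mixed terms $\sum_i x^i\cdot f\otimes x_i\cdot g$ (and the symmetric partner) are exactly the ingredients of the Poisson bracket formula $\{f,g\}=\sum_i(x^i\cdot f)(x_i\cdot g)$ recalled before Proposition~\ref{prop:dressing}. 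The key algebraic fact is that $x^i$ acts on $M_+=\ind_{\mf g}^{\mf d}1$ (resp.\ $x_i$ acts on $M_-=\ind_{\mf g^*}^{\mf g}1$) through the coalgebra structure in a controlled way, and the associator $\tilde\Phi$, being built from $\h t$, also only involves such nested brackets. So I would show: if $g\in\cO(G)^{\mf g^*}$ then every term of $\beta^{-1}-P$ and of $\tilde\Phi-1$ that could act on the $g$-slot ultimately involves $x_i\cdot g$ for some $i$, which acts on $M_-$, not on $\cO(G)$ directly — and after commuting through the comultiplications and using $\mf g^*$-invariance of $g$ the contribution collapses. The cleanest route is probably to show directly that for $g$ in the Poisson center, $f\star_\h^\ca g = fg$ and $g\star_\h^\ca f=fg$, so $g$ is central and the product restricted to the center is undeformed. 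For the converse inclusion — that the center of $\cO_\h^\ca(G)$ is \emph{contained} in $\cO(G)^{\mf g^*}[[\h]]$ — I would argue order by order in $\h$: the order-$\h$ part of the commutator $[f,-]_{\star^\ca}$ is $f\mapsto\{f,-\}$, so a central element must be Poisson-central modulo $\h$, and then an induction on $\h$-degree using that each graded piece of a central element, after subtracting a correction, is again forced into the Poisson center.

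The main obstacle I expect is bookkeeping the associativity constraints $\tilde\Phi$: the braided-tensor-product coalgebra structure on $M_+\ot M_-$ genuinely involves rebracketing a fourfold tensor product, so the naive claim "$\beta=P$ on the relevant slot" is not literally what is needed — one must check that the \emph{entire} composite, associators included, reduces to ordinary multiplication when restricted to the Poisson center. Here the graphical calculus should do the heavy lifting: I would draw the defining diagram for $J^\ca$, slide the $g$-strand (which lands in $\cO(G)$ via an $M_-$-colored input) past the crossing and past the associator boxes using naturality, and observe that at each such move the correction term carries an action of some $x_i\in\mf g\subset\mf d$ on the $M_-$-side which, transported to $\cO(G)$, becomes a dressing action by $\mf g$ — but invariance we have is under $\mf g^*$, not $\mf g$. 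This is the delicate point, and I suspect the resolution is that one must instead slide the \emph{central} strand through the part of the diagram where only $M_+$'s comultiplication and the $\mf g^*$-action are involved; equivalently, exploit the asymmetry between $M_+$ and $M_-$ and the precise form of the braiding $\beta_{M_+,M_-}$ versus $\beta_{M_-,M_+}$. Once the correct strand-sliding is identified, cocommutativity of $M_\pm$ in $\cA$ (Proposition~\ref{prop:coalg}) and Proposition~\ref{prop:dressing} close the argument, and the ``undeformed on the center'' conclusion is immediate. The converse inclusion is then a routine filtration argument and should not present real difficulty beyond care with the $O(\h^2)$ terms.
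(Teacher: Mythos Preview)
Your proposal has a genuine gap, and it stems from aiming at the wrong intermediate statement. You set out to show that for $g$ in the Poisson center and \emph{arbitrary} $f$ one has $f\star_\h^\ca g = fg = g\star_\h^\ca f$, i.e., that the product against a Poisson-central element is completely undeformed. This is stronger than the theorem asserts (the theorem only says the restriction to the Poisson center is undeformed and the image is central), and there is no reason to expect it to be true: once you unfold the braided coproduct of $M_+\ot M_-$, the correction terms from $\exp(\h t/2)$ and $\tilde\Phi$ act on tensor factors via the \emph{full} $\mf d$-action, and $\mf g^*$-invariance of $g$ alone does not kill them. You correctly flag this as ``the delicate point'' but your proposed resolution (sliding the central strand past certain associator boxes) is not carried out, and in fact would at best yield centrality, not the undeformed-product claim.

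The paper avoids this entirely by a short categorical argument. The key reformulation is Frobenius reciprocity: the Poisson center $\cO(G)^{\mf g^*}[[\h]]$ is exactly $\hom_\cA(M_-,\cO(G))$. Then two general lemmas do all the work. First, for coalgebras $C_1$, $C_2$ in any braided category, the counit of $C_1$ gives a coalgebra map $C_1\ot C_2\to C_2$, hence an \emph{algebra} map $\rho\colon\hom_\cA(C_2,A)\to\hom_\cA(C_1\ot C_2,A)$; with $C_1=M_+$, $C_2=M_-$, $A=\cO(G)$, this $\rho$ is precisely the canonical inclusion and is automatically multiplicative. Second, if $C_2$ is cocommutative and $A$ commutative \emph{in $\cA$} (which $M_-$ and $\cO(G)$ are, by Propositions~\ref{prop:coalg} and the one preceding Theorem~\ref{thm:EK}), a two-line graphical manipulation shows $\rho(f)\star g = g\star\rho(f)$ for all $g$. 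The point is that cocommutativity and commutativity hold \emph{in $\cA$}, with its nontrivial braiding and associator, so the correction terms you were worried about are absorbed into those hypotheses rather than needing to be killed term by term. Your converse argument ($\h$-adic induction from $[f,g]=\h\{f,g\}+O(\h^2)$) is essentially the paper's, and is fine.
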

Combined with the Hopf algebra isomorphism $\cO_\h^\ca(G)\cong \cO_\h(G)$ it implies the main result of this paper:
\begin{thm}\label{thm:main}
	The center of $\cO_\h(G)$ is isomorphic as an algebra to the Poisson center of $\cO(G)[[\h]]$.
\end{thm}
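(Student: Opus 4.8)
The plan is to deduce this immediately from the two structural results already established. Theorem~\ref{thm:EK} provides an isomorphism of Hopf algebras $\cO_\h^\ca(G)\cong \cO_\h(G)$; in particular this is an isomorphism of associative $\kk[[\h]]$-algebras, so it carries the center of $\cO_\h^\ca(G)$ onto the center of $\cO_\h(G)$. On the other hand, Theorem~\ref{thm:Poisson} identifies the center of $\cO_\h^\ca(G)$ with the image of the $\kk[[\h]]$-linear extension of the Poisson center of $\cO(G)$ under the canonical inclusion, and states that this inclusion is an algebra morphism. Composing, one obtains an algebra isomorphism between the center of $\cO_\h(G)$ and the trivial $\kk[[\h]]$-extension of the Poisson center of $\cO(G)$, i.e.\ the Poisson center of $\cO(G)[[\h]]$.

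Concretely, I would first invoke Theorem~\ref{thm:Poisson} to obtain that the inclusion
\begin{gather*}
	\iota\colon\ Z_{\mathrm{Poisson}}(\cO(G))[[\h]]\hookrightarrow \cO_\h^\ca(G)
\end{gather*}
is an injective morphism of $\kk[[\h]]$-algebras with image equal to the center $Z(\cO_\h^\ca(G))$. Then I would apply the Hopf (hence in particular algebra) isomorphism $\varphi\colon \cO_\h^\ca(G)\xrightarrow{\ \sim\ }\cO_\h(G)$ from the second bullet of Theorem~\ref{thm:EK}, noting that an algebra isomorphism restricts to an isomorphism on centers, so $\varphi$ maps $Z(\cO_\h^\ca(G))$ isomorphically onto $Z(\cO_\h(G))$. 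The composite $\varphi\circ\iota$ is then the desired algebra isomorphism
\begin{gather*}
	Z_{\mathrm{Poisson}}(\cO(G))[[\h]]\xrightarrow{\ \sim\ } Z(\cO_\h(G)).
\end{gather*}

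There is essentially no obstacle here: all the work has been done in Theorems~\ref{thm:Poisson} and~\ref{thm:EK} (and, behind the latter, in Propositions~\ref{prop:dual}, \ref{prop:transpose} and~\ref{prop:monoidal}, which repair the Etingof--Kazhdan identification of the two fiber functors). The only point worth stating explicitly is the elementary observation that any isomorphism of associative algebras restricts to a bijection between their centers, so that the identification of centers is compatible with the identification $\cO_\h^\ca(G)\cong\cO_\h(G)$; everything else is formal composition.
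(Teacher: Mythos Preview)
Your proposal is correct and matches the paper's approach exactly: the paper states just before Theorem~\ref{thm:main} that it follows by combining Theorem~\ref{thm:Poisson} with the Hopf algebra isomorphism $\cO_\h^\ca(G)\cong \cO_\h(G)$ of Theorem~\ref{thm:EK}. You have simply spelled out this deduction in slightly more detail, including the elementary remark that algebra isomorphisms preserve centers.
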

The rest of this section is devoted to the proof of Theorem~\ref{thm:Poisson}. We start with a few general facts about coalgebras in a braided monoidal category.
\begin{lem}\label{lem:1}
Let $C_1$, $C_2$ be two coalgebras in a braided tensor category $\cA$. Then the counit of~$C_1$ induces a coalgebra morphism from the braided tensor product $C_1\ot C_2$ to $C_2$.
\end{lem}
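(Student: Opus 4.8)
The plan is to check directly that the counit $\varepsilon_1\colon C_1\to \mathbf 1$ (where $\mathbf 1$ is the monoidal unit), tensored with $\id_{C_2}$ and composed with the left unit constraint $\mathbf 1\ot C_2\xrightarrow{\cong}C_2$, is a morphism of coalgebras from the braided tensor product $C_1\ot C_2$ to $C_2$. There are two things to verify: compatibility with the counits, and compatibility with the comultiplications. The first is immediate, since the counit of the braided tensor product $C_1\ot C_2$ is $\varepsilon_1\ot\varepsilon_2$ followed by the unit constraint, so precomposing with $\varepsilon_1\ot\id_{C_2}$ leaves exactly $\varepsilon_2$ up to the coherence isomorphisms, which I would suppress.

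The second and only substantive point is the compatibility with comultiplication. Writing $\Delta_{12}$ for the comultiplication of the braided tensor product — the map depicted in the excerpt, namely $\Delta_1\ot\Delta_2$ followed by the braiding $\beta$ applied to the two middle tensor factors — I would chase the two composites
$(\varepsilon_1\ot\id)\otimes(\varepsilon_1\ot\id)\circ\Delta_{12}$ and $\Delta_2\circ(\varepsilon_1\ot\id)$
around. The key step is to push the two copies of $\varepsilon_1$ past the braiding: since the braiding is natural, applying $\varepsilon_1\colon C_1\to\mathbf 1$ on both strands that pass through the crossing turns that crossing into the (trivial, up to unit constraints) braiding on $\mathbf 1\ot C_2$, so the crossing disappears. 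What survives on the $C_1$ side is $(\varepsilon_1\ot\varepsilon_1)\circ\Delta_1$, which equals $\varepsilon_1$ by the counit axiom for $C_1$; what survives on the $C_2$ side is exactly $\Delta_2$. After absorbing the unit constraints this is precisely $\Delta_2\circ(\varepsilon_1\ot\id)$, as desired. In the graphical calculus of the paper this is a one-line picture: the $\varepsilon_1$-cap slides up through the crossing (naturality of $\beta$) and then annihilates one output of $\Delta_1$.

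I do not expect any real obstacle here; the only thing to be careful about is the bookkeeping of associativity and unit constraints in a non-strict braided category (the category $\cA$ has the non-trivial associator $\alpha$ coming from $\tilde\Phi$). A clean way to handle this is to invoke Mac Lane coherence and work as if $\cA$ were strict, or equivalently to carry out the computation entirely in the graphical calculus, where coherence is built in; then the proof is just the picture manipulation described above. By the obvious left–right symmetry the same argument shows that the counit of $C_2$ induces a coalgebra morphism $C_1\ot C_2\to C_1$, which is the form in which the lemma will be used in the sequel.
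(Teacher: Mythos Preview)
Your proposal is correct and follows exactly the same approach as the paper: the paper's proof consists of a single graphical identity expressing precisely the manipulation you describe (slide the two copies of $\varepsilon_1$ through the braiding by naturality, then use the counit axiom $(\varepsilon_1\ot\varepsilon_1)\circ\Delta_1=\varepsilon_1$), and does not even spell out the counit-compatibility or the coherence bookkeeping that you carefully mention.
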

\begin{proof}
We have:
\begin{gather*}
\tik{\straight{0.5}{0.5} \straight{2.5}{0.5}
	\ap{0}{1}\ap{2}{1}
	\straight{0}{2}\negative{1}{2}\straight{3}{2}
	\node at (0.5,0) {$C_1$};
\node at (2.5,0) {$C_2$};
\node[point] at (0,-3) {};
\node[point] at (2,-3) {};
}=
\tik{
	\comult{1.5}{0} \straight{0.5}{0}
\node at (2,0.5) {$C_2$};
	\node at (0.5,0.5) {$C_1$};
\node[point] at (0.5,-1) {};
}\tag*{\qed}
\end{gather*}
\renewcommand{\qed}{}
\end{proof}

Therefore, the counit of $C_1$ induces a monoidal natural transformation from the functor $\hom_\cA(C_2,-)$ to the functor $\hom_\cA(C_1 \ot C_2,-)$, hence for every algebra $A$ in $\cA$ an algebra morphism
\begin{gather*}
\rho\colon \ \hom_\cA(C_2 ,A)\longrightarrow \hom_\cA(C_1 \ot C_2,A).
\end{gather*}
Indeed, for all $f,g\in \hom_\cA(C_2,A)$
\begin{gather*}
	\rho(f)\star \rho(g)=
	\tik{
		\comult{0}{0}\comult{2}{0}
	\straight{0}{1}\negative{1}{1}\straight{3}{1}
	\straight{1}{2}\straight{3}{2}
	\node[point] at (0,-2) {};
	\node[point] at (2,-2) {};
	\node[map] at (1,-2.5) {$f$};
	\node[map] at (3,-2.5) {$g$};
	\begin{scope}[xscale=2]
		\mult{0.5}{3}
	\end{scope}
	}=
	\tik{
		\comult{0}{-1}\straight{-1}{-1}\node[point] at (-1,0) {};
	\straight{0}{0} \node[rectangle,fill=white,draw] at (0,-0.5) {$f$};
	\straight{1}{0} \node[rectangle,fill=white,draw] at (1,-0.5) {$g$};
	\mult{0}{1}
}\ =\rho(f\star' g),
\end{gather*}
where $\star$ is the product in $\hom_\cA(C_1\ot C_2, A)$ induced from the monoidal structure on $\hom_\cA(C_1\ot C_2,-)$ and from the product of $A$, and where $\star'$ is def\/ined similarly for $\hom_\cA(C_2,A)$.
\begin{lem}\label{lem:2}
If $C_2$ is cocommutative and $A$ is commutative, then the image of $\rho$ is in the center of $\hom_\cA(C_1 \ot C_2,A)$.
\end{lem}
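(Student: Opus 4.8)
The plan is to prove Lemma~\ref{lem:2} by a direct graphical computation, exhibiting that for any $f\in\hom_\cA(C_2,A)$ and any $h\in\hom_\cA(C_1\ot C_2,A)$ one has $\rho(f)\star h=h\star\rho(f)$. The only tools needed are the three structural properties already established: cocommutativity of $C_2$, commutativity of $A$, and the formula for $\star$ coming from the braided monoidal structure on $\hom_\cA(C_1\ot C_2,-)$, whose comultiplication is the braided tensor product depicted before Lemma~\ref{lem:1}. First I would write $\rho(f)$ explicitly using the formula from Lemma~\ref{lem:1}: $\rho(f)$ is $f$ precomposed with the counit of $C_1$ tensored against the identity of $C_2$. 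Then $\rho(f)\star h$ is the diagram in which one first applies the braided comultiplication $(C_1\ot C_2)\to(C_1\ot C_2)\ot(C_1\ot C_2)$, feeds the first leg to $\rho(f)$ (killing its $C_1$-strand by the counit) and the second leg to $h$, and finally multiplies in $A$.

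The core of the argument is then to push the counit of $C_1$ through the braiding in the braided comultiplication so that the $C_1$-strand of the first tensor factor is killed before it ever crosses anything. Concretely, the braided comultiplication on $C_1\ot C_2$ involves comultiplying each $C_i$ and then a single crossing exchanging the outgoing $C_2$-strand of the first factor with the outgoing $C_1$-strand of the second. When the first copy of $C_1$ is capped by $\varepsilon_{C_1}$, naturality of the braiding lets that crossing be removed, and the diagram collapses: what remains on the $C_1$-input is $\varepsilon_{C_1}$ composed with the comultiplication of $C_1$, i.e.\ just $\varepsilon_{C_1}$ on a single strand by counitality. After this simplification $\rho(f)\star h$ becomes the evident diagram: comultiply $C_2$, apply $f$ to one leg and $h$ to the remaining $C_1\ot C_2$, multiply in $A$. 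For $h\star\rho(f)$ one runs the same simplification on the second tensor factor; here the counit on $C_1$ sits on the lower strand, and after removing the crossing we again land on ``comultiply $C_2$, apply $h$ and $f$, multiply in $A$'', except the two $A$-valued boxes enter the multiplication in the opposite order and the two legs of the $C_2$-comultiplication are swapped.

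Equating the two reduced diagrams is now immediate: commutativity of $A$ exchanges the order of the inputs to $m_A$, and cocommutativity of $C_2$ (which says its comultiplication is unchanged by post-composition with the braiding) absorbs the remaining swap of the $C_2$-legs. Hence $\rho(f)\star h=h\star\rho(f)$, which is exactly the claim that $\rho(f)$ is central, and since this holds for all $f$ the image of $\rho$ lies in the center of $\hom_\cA(C_1\ot C_2,A)$.

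I expect the only subtle point to be bookkeeping the associativity constraints of $\cA$ when manipulating the four-fold tensor products implicit in $\rho(f)\star h$: the displayed ``diagrams'' are really morphisms in a nonstrict braided monoidal category, so the coherence isomorphisms must be silently inserted and the naturality-of-braiding move and the counitality move must be verified to be compatible with them. This is routine by Mac Lane coherence — the same suppression of associators is already used freely in the computation of $\rho(f)\star\rho(g)$ just above — so the argument remains at the level of graphical calculus and no genuine obstacle arises.
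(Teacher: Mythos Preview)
Your overall strategy matches the paper's: both arguments are graphical computations using counitality of $C_1$, commutativity of $A$, and cocommutativity of $C_2$. The paper, however, applies commutativity and cocommutativity \emph{first} to transform $\rho(f)\star g$ into a diagram with the crossing relocated, and only afterwards invokes the counit axiom to recognize the result as $g\star\rho(f)$.

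Your execution of the simplification of $\rho(f)\star h$ contains a slip. In the braided coproduct of $C_1\otimes C_2$ the crossing sits between the \emph{second} leg of $\Delta_{C_1}$ and the first leg of $\Delta_{C_2}$. For $\rho(f)\star h$ the counit lands on the \emph{first} leg of $\Delta_{C_1}$, which never meets the crossing; so there is nothing to ``push through the braiding'' there, and counitality leaves behind $\id_{C_1}$ (not $\varepsilon_{C_1}$), feeding the surviving $C_1$-strand through the crossing into~$h$. Thus your ``evident diagram'' for $\rho(f)\star h$ still carries that crossing (indeed, without it the inputs to $f$ and $h$ do not sit in the right order). It is only in $h\star\rho(f)$ that the counit hits the crossed strand and naturality legitimately removes the crossing, as you correctly describe.

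Consequently the final comparison needs a little more than you indicate: after using commutativity of $A$ in the form $m=m\circ\beta^{\pm1}_{A,A}$ and expanding via the hexagon, the residual crossing between $C_1$ and a $C_2$-leg cancels against part of the braiding coming from commutativity, and what remains is a single $\beta_{C_2,C_2}$ which is then absorbed by cocommutativity of~$C_2$. This is precisely what the paper's ordering accomplishes in one move; once you correct this step your argument goes through and is essentially the same proof.
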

\begin{proof}
Let $f\in\hom_\cA(C_2,A)$ and $g \in\hom_\cA(C_1 \ot C_2,A)$. Then
\begin{gather*}
	\rho(f)\star g=\tik{
				\comult{0}{0}\comult{2}{0}
	\straight{0}{1}\negative{1}{1}\straight{3}{1}
	\straight{1}{2}
	\node[point] at (0,-2) {};
	\node[map] at (1,-2.5) {$f$};
	\triup{2}{2}{$g$}
	\begin{scope}[xscale=1.5]
		\mult{0.666666}{3}
	\end{scope}
	}
\end{gather*}
Using the commutativity of $A$ and the cocommutativity of $C_2$ this equals
\begin{gather*}
	\tik{
				\comult{0}{0}\comult{2}{0}
				\straight{0}{1}	\triup{1}{1}{$g$}\straight{3}{1}
	\node[map] at (3,-1.5) {$f$};
	\node[point] at (0,-2) {};
	\begin{scope}[xscale=1.5]
		\mult{1}{2}
	\end{scope}
			}
\end{gather*}
Then, by def\/inition of the counit one obtains
\begin{gather*}
	\tik{
				\comult{0}{0}\comult{2}{0}
	\straight{0}{1}\negative{1}{1}\straight{3}{1}
	\node[point] at (2,-2) {};
	\triup{0}{2}{$g$}\straight{3}{2}
	\node[map] at (3,-2.5) {$f$};
	\begin{scope}[xscale=2.5]
		\mult{0.2}{3}
	\end{scope}
}=g\star \rho(f).\tag*{\qed}
\end{gather*}
\renewcommand{\qed}{}
\end{proof}

We now prove Theorem~\ref{thm:Poisson}. By Proposition~\ref{prop:dressing} the Poisson center of $\cO(G)$ coincides with its subspace of $\mf g^*$-invariants. Therefore, by Frobenius reciprocity the $\kk[[\h]]$-extension of the Poisson center of $\cO(G)$ coincides with $\hom_\cA(M_-,\cO(G))$. Since $M_-$ is a coalgebra, the functor $\hom_\cA(M_-,-)$ is monoidal, and it is easily checked that this monoidal structure coincides with the canonical inclusion $V^{\mf g^*}\ot W^{\mf g^*}\subset (V\ot W)^{\mf g^*}$. In the same way the map
\begin{gather*}
	\hom_\cA(M_-,\cO(G))\rightarrow \hom_\cA(M_- \ot M_+,\cO(G))
\end{gather*}
induced by the counit of $M_+$ coincides with the inclusion $\cO(G)^{\mf g^*} \subset \cO(G)$. By Lemmas~\ref{lem:1} and~\ref{lem:2}, this is an algebra morphism whose image lies in the center of $\cO_\h^\ca(G)$.

	On the other hand, observe that any central element of $\cO_\h^\ca(G)$ belongs to the Poisson center of $\cO(G)[[\h]]$: if $f$ is central then for any function $g$ one has
	\begin{gather*}
		[f,g]=0=\h\{f,g\}+O\big(\h^2\big),
	\end{gather*}
	where $[f,g]$ is the commutator in the deformed algebra $\cO_\h^\ca(G)$. This completes the proof of Theorem~\ref{thm:Poisson}.

	\begin{rmk}\label{rmk:gamma}
		It is worth emphasizing that the map from the Poisson center of $\cO(G)$ to $\cO_\h(G)$ is not the standard inclusion. Indeed,, it relies on the isomorphism of Proposition~\ref{prop:monoidal}, which itself relies on the element $\beta$ chosen in Proposition~\ref{prop:dual}, which thus plays the role of the Duf\/lo element. Hence we expect $\beta$ to be closely related to the Duf\/lo function associated with $\Phi$ in the sense of~\cite{Alekseev2010}, also known as the Gamma function of $\Phi$.
	\end{rmk}

\section{The Duf\/lo isomorphism}
In this section we show that applying Theorem~\ref{thm:main} to the case where $G=\mf a^*$ for a f\/inite-dimensional Lie algebra $\mf a$, one recovers the Duf\/lo isomorphism. Let $\mf a_\h$ be the Lie algebra over~$\kk[[\h]]$ which is $\mf a[[\h]]$ as a module, and whose bracket is~$\h$ times the bracket of $\mf a$. Let $U(\mf a_\h)$ be its enveloping algebra. By the PBW theorem, there is a coalgebra isomorphism
\begin{gather*}
	S(\mf a)[[\h]]\cong U(\mf a_\h).
\end{gather*}
Pulling back the product of $U(\mf a_\h)$ through this isomorphism, one gets an algebra structure $\star_{\rm PBW}$ on $S(\mf a)[[\h]]$ compatible with the standard coproduct. It is easily seen that this is a quantization of the Poisson algebraic group $\mf a^*$. By~\cite[Lemma~3.2]{Enriquez2010a} this is the only functorial quantization of this Poisson group. This implies
\begin{prop}
	The Hopf algebra $\cO_\h(\mf a^*)$ is canonically, naturally isomorphic to
\begin{gather*}
(S(\mf a)[[\h]],\star_{\rm PBW},\Delta_0).
\end{gather*}
\end{prop}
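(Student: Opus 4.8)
The plan is to deduce the isomorphism from the uniqueness of functorial quantizations of the Poisson algebraic group $\mf a^*$, already invoked in the statement via \cite[Lemma~3.2]{Enriquez2010a}, by exhibiting both $\cO_\h(\mf a^*)$ and $(S(\mf a)[[\h]],\star_{\rm PBW},\Delta_0)$ as such functorial quantizations.

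First I would fix the Poisson-group data: $\cO(\mf a^*)=S(\mf a)$ as a Hopf algebra (polynomial functions on the abelian group $\mf a^*$), with the multiplicative Poisson bracket being the Kostant--Kirillov bracket, equivalently the one induced by the Lie cobracket $\delta_{\mf a}\colon\mf a^*\to\wedge^2\mf a^*$ dual to the bracket of $\mf a$; this $\delta_{\mf a}$ is the Lie bialgebra underlying the Poisson group $\mf a^*$. Next I would verify that $(S(\mf a)[[\h]],\star_{\rm PBW},\Delta_0)$ is a quantization of $\mf a^*$ in the sense of Section~2: the symmetrization map $S(\mf a)[[\h]]\xrightarrow{\cong}U(\mf a_\h)$ is a coalgebra isomorphism carrying $\Delta_0$ to the standard (primitive) coproduct of $U(\mf a_\h)$, so the coproduct is genuinely undeformed; and since the commutator in $U(\mf a_\h)$ of two degree-one elements is $\h$ times their $\mf a$-bracket, one reads off $[f,g]_{\star_{\rm PBW}}=\h\{f,g\}+O(\h^2)$ for all $f,g\in S(\mf a)[[\h]]$. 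On the Etingof--Kazhdan side, $\cO_\h(\mf a^*)=F(\cO(\mf a^*))$ is a Hopf algebra quantizing $\mf a^*$ by Theorem~\ref{thm:EK}.

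Then comes the step that makes ``unique'' applicable: both assignments must be functorial in $\mf a$. For $\star_{\rm PBW}$ this holds because $\mf a\mapsto U(\mf a_\h)$ is a functor and the symmetrization map $S(\mf a)\to U(\mf a)$ is natural in $\mf a$. For the Etingof--Kazhdan side, Theorem~\ref{thm:EK}(iii) supplies a contravariant functor $G\mapsto\cO_\h(G)$ on finite-dimensional Poisson groups; precomposing with the contravariant functor $\mf a\mapsto\mf a^*$ from finite-dimensional Lie algebras to Poisson groups (a Lie algebra map $\phi$ goes to its transpose $\phi^t$, which is a morphism of Poisson groups precisely because $\phi$ respects the bracket, equivalently $\phi^t$ respects $\delta$) yields a covariant functor $\mf a\mapsto\cO_\h(\mf a^*)$ to Hopf algebras over $\kk[[\h]]$. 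Having produced two functorial quantizations of the Poisson algebraic group $\mf a^*$, \cite[Lemma~3.2]{Enriquez2010a} gives an isomorphism between them, canonical and natural in $\mf a$, which is the assertion.

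The main obstacle here is bookkeeping rather than genuine difficulty: one must make sure the notion of ``functorial quantization'' used in \cite{Enriquez2010a} matches the structure present on both sides --- in particular that the isomorphism it delivers is one of Hopf algebras, not merely of algebras, and is compatible with the functor structures --- together with the two routine verifications above (that $\Delta_0$ is the coproduct of $\star_{\rm PBW}$, and that its first-order commutator is the Kostant--Kirillov bracket). No graphical calculus is needed; the proof is the assembly of results already in place.
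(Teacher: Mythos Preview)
Your proposal is correct and follows precisely the approach the paper uses: the paper establishes that $(S(\mf a)[[\h]],\star_{\rm PBW},\Delta_0)$ is a functorial quantization of the Poisson group $\mf a^*$, invokes \cite[Lemma~3.2]{Enriquez2010a} to assert uniqueness of such functorial quantizations, and then deduces the proposition as an immediate consequence of Theorem~\ref{thm:EK}. Your write-up is simply a more explicit unpacking of the same argument, with the functoriality checks spelled out in detail.
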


By construction the product $\star_{\rm PBW}$ can be restricted to the subspace $S(\mf a)[\h]$ of polynomials in $\h$. This restriction can then be specialized at $\h=1$ and the resulting algebra is clearly the enveloping algebra of~$\mf a$. On the other hand, the Poisson center of $\cO(\mf a^*)=S(\mf a)$ coincide with the invariant under the adjoint action $S(\mf a)^{\mf a}$. This is well-known, and consistent with Proposition~\ref{prop:dressing} since in that case the dressing action of $\mf g^*=\mf a$ coincides with the adjoint action of $\mf a$ on $S(\mf a)$. One check that the isomorphism
\begin{gather*}
	\rho\colon \ \cO_\h(\mf a^*)^\ca\cong \cO_\h(\mf a^*)
\end{gather*}
coming from Theorem~\ref{thm:EK} also preserves the sub-space of polynomials in $\h$. Hence we get:
\begin{cor}[Duf\/lo isomorphism]
The isomorphism
\begin{gather*}
	S(\mf a)\cong U(\mf a)
\end{gather*}
given as the composition of $\rho$ with the PBW isomorphism, restricts to an algebra isomorphism
\begin{gather*}
	S(\mf a)^{\mf a}\cong U(\mf a)^{\mf a}.
\end{gather*}
\end{cor}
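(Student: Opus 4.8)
The plan is to deduce the Corollary from Theorem~\ref{thm:main} (equivalently Theorem~\ref{thm:Poisson}) applied to the Poisson algebraic group $G=\mf a^*$, together with the specialization-at-$\h=1$ mechanism already sketched in the running text. First I would recall, via the Proposition immediately preceding the Corollary, that $\cO_\h(\mf a^*)$ is isomorphic as a Hopf algebra to $(S(\mf a)[[\h]],\star_{\rm PBW},\Delta_0)$, where $\star_{\rm PBW}$ is obtained by transporting the product of $U(\mf a_\h)$ along the PBW coalgebra isomorphism $S(\mf a)[[\h]]\cong U(\mf a_\h)$. The point of this identification is twofold: it exhibits $\cO_\h(\mf a^*)$ concretely, and it shows that the product restricts to the dense subspace $S(\mf a)[\h]$ of polynomials in $\h$, on which one may then set $\h=1$; the resulting specialization is, by inspection of $U(\mf a_\h)$ at $\h=1$, exactly $U(\mf a)$ with its PBW coalgebra identification with $S(\mf a)$.

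Next I would invoke Theorem~\ref{thm:Poisson}: the canonical inclusion of the $\kk[[\h]]$-linear extension of the Poisson center of $\cO(\mf a^*)$ into $\cO_\h(\mf a^*)^\ca$ is an algebra morphism whose image is precisely the center of $\cO_\h^\ca(\mf a^*)$. Since for $G=\mf a^*$ the Lie bialgebra is $\mf g=\mf a$ with trivial cobracket, the dressing action of $\mf g^*=\mf a$ on $\cO(\mf a^*)=S(\mf a)$ is the ordinary adjoint action, so by Proposition~\ref{prop:dressing} the Poisson center is $S(\mf a)^{\mf a}$. Composing with the Hopf algebra isomorphism $\rho\colon\cO_\h(\mf a^*)^\ca\cong\cO_\h(\mf a^*)$ of Theorem~\ref{thm:EK}, and then with the PBW identification, we obtain an algebra embedding of $S(\mf a)^{\mf a}[[\h]]$ into $(S(\mf a)[[\h]],\star_{\rm PBW})$ whose image is the center. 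The final thing to check is that this whole composite preserves polynomiality in $\h$: the inclusion $S(\mf a)^{\mf a}[\h]\hookrightarrow S(\mf a)[\h]$ is visible before passing to $[[\h]]$, and one must verify that $\rho$ restricts to an automorphism of $S(\mf a)[\h]$. This is the assertion stated just above the Corollary ("one checks that the isomorphism $\rho$ also preserves the sub-space of polynomials in $\h$"), and it follows because $\rho$ is built out of the universal element $\beta$ of Proposition~\ref{prop:dual}, whose action is given by a formal series in $\h$ with each coefficient a differential operator of bounded order on $S(\mf a)$, hence preserves $S(\mf a)[\h]$ and is invertible there.

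With polynomiality in hand, specializing the whole picture at $\h=1$ converts the statement "$S(\mf a)^{\mf a}[[\h]]\hookrightarrow(S(\mf a)[[\h]],\star_{\rm PBW})$ is an algebra map onto the center" into "$S(\mf a)^{\mf a}\hookrightarrow U(\mf a)$ is an algebra map onto $U(\mf a)^{\mf a}$", where on the source side the $\star_{\rm PBW}$-product on invariants at $\h=1$ is just the ordinary commutative product of $S(\mf a)$ (the Poisson bracket vanishes on the center, so there is no deformation to see there, exactly as in the proof of Theorem~\ref{thm:Poisson}), and on the target side the center of $U(\mf a)$ specializing from the center of $\cO_1^\ca$ is $U(\mf a)^{\mf a}$ by the classical fact that the center of $U(\mf a)$ is $\ker(\operatorname{ad})$. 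Thus the composite of $\rho$ with the PBW isomorphism restricts to the desired algebra isomorphism $S(\mf a)^{\mf a}\cong U(\mf a)^{\mf a}$.

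The main obstacle is the $\h$-polynomiality claim for $\rho$: one must argue carefully that the monoidal isomorphism of Proposition~\ref{prop:monoidal} — which involves the coevaluation modified by $\nu^{1/2}$ and by the central element $u$ of Proposition~\ref{prop:dual}, i.e. ultimately by a specialization of the Kontsevich integral of the unknot — acts on $S(\mf a)$ by a well-defined operator on $S(\mf a)[\h]$ rather than only on its completion. Concretely this reduces to noting that $\beta\in U(\mf d)^{\mf d}[[\h]]$ is of the form $1+O(\h)$ with each $\h$-coefficient lying in a finite piece of the PBW filtration, so that its action on the locally finite module $S(\mf a)$ raises $\h$-degree by a controlled amount and is invertible degreewise; everything else in the Corollary is then a routine unwinding of the two theorems already proved. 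I would present this last verification as a short lemma rather than leave it implicit.
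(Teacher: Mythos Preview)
Your approach is essentially the same as the paper's: apply Theorem~\ref{thm:Poisson} to $G=\mf a^*$, identify the Poisson center with $S(\mf a)^{\mf a}$ via Proposition~\ref{prop:dressing}, transport through $\rho$ and the PBW identification, and specialize at $\h=1$; your discussion of the $\h$-polynomiality of $\rho$ is in fact more explicit than the paper's bare ``one checks''. One small slip to fix: for $G=\mf a^*$ the Lie bialgebra is $\mf g=\mf a^*$ with \emph{trivial bracket} (and cobracket dual to the bracket of $\mf a$), not $\mf g=\mf a$ with trivial cobracket --- this is what makes your subsequent identification $\mf g^*=\mf a$ consistent.
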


\begin{rmk}
It is known that the even part of a universal Duf\/lo isomorphism for Lie algebras is uniquely determined and coincide with the original Duf\/lo element introduced in~\cite{Duflo1977}. Hence, in the case of an even associator one recovers the original Duf\/lo isomorphism.
\end{rmk}

\pdfbookmark[1]{References}{ref}
\LastPageEnding

\end{document}